\newcommand{\map}[1]{\xrightarrow{#1}}
\newcommand{\iso}{\cong}
\newcommand{\ord}{\mathrm{ord}}
\newcommand{\define}{\stackrel{\mathrm{def}}{=}}
\newcommand{\Hom}{\mathrm{Hom}}
\newcommand{\End}{\mathrm{End}}
\newcommand{\Spec}{\mathrm{Spec}}
\newcommand{\Spf}{\mathrm{Spf}}
\newcommand{\Q}{\mathbb Q}
\newcommand{\Z}{\mathbb Z}
\newcommand{\F}{\mathbb F}
\newcommand{\co}{\mathcal O}
\newcommand{\Lie}{\mathrm{Lie}}
\newcommand{\Fil}{\mathrm{Fil}}
\newcommand{\kk}{{\bm{k}}}
\newcommand{\Exc}{\mathrm{Exc}}
\newcommand{\old}{\mathrm{old}}
\newcommand{\rig}{\mathrm{rig}}
\begin{document}
\author{Benjamin Howard}
\title[Linear invariance of intersections]{Linear invariance of intersections on unitary Rapoport-Zink spaces}
\date{\today}

\thanks{This research was supported in part by NSF grants DMS1501583 and DMS1801905.}
\address{Department of Mathematics\\Boston College\\ 140 Commonwealth Ave. \\Chestnut Hill, MA 02467, USA}
\email{howardbe@bc.edu}

\begin{abstract}
We prove an invariance property of intersections of Kudla-Rapoport divisors on a unitary Rapoport-Zink space.
\end{abstract}

\maketitle

\theoremstyle{plain}
\newtheorem{theorem}{Theorem}[section]
\newtheorem{bigtheorem}{Theorem}[section]
\newtheorem{proposition}[theorem]{Proposition}
\newtheorem{lemma}[theorem]{Lemma}
\newtheorem{corollary}[theorem]{Corollary}
\newtheorem{bigcorollary}[bigtheorem]{Corollary}

\theoremstyle{definition}
\newtheorem{definition}[theorem]{Definition}
\newtheorem{hypothesis}[theorem]{Hypothesis}
\newtheorem{bighypothesis}[bigtheorem]{Hypothesis}

\theoremstyle{remark}
\newtheorem{remark}[theorem]{Remark}
\newtheorem{example}[theorem]{Example}
\newtheorem{question}[theorem]{Question}

\numberwithin{equation}{section}
\renewcommand{\thebigtheorem}{\Alph{bigtheorem}}
\renewcommand{\thebighypothesis}{\Alph{bigthypothesis}}


\section{Introduction}


Let $p$ be a prime,  let $\kk$ be a quadratic extension of $\Q_p$, and let $\co_\kk \subset \kk$ be the ring of integers.
Denote by   $\breve{\kk}$  the completion of the maximal unramified extension of $\kk$, 
let $\breve{\co}_{\kk} \subset \breve{\kk}$ be the ring of integers, 
and let $\breve{\mathfrak{m}} \subset \breve{\co}_\kk$ be the maximal ideal.  
The nontrivial automorphism of $\kk$ is denoted by  $\alpha\mapsto \overline{\alpha}$, and we  denote by 
\[
\varphi ,\overline{\varphi} : \co_\kk \to \breve{\co}_\kk
\]
the inclusion and its conjugate $\overline{\varphi}(\alpha)=\varphi(\overline{\alpha})$,  respectively.

\begin{bighypothesis}\label{hyp:unramified}
Throughout the paper we assume that either $\kk/\Q_p$ is unramified, or that $\kk/\Q_p$ is ramified but $p>2$.
\end{bighypothesis}

In this paper, we study the intersections of special divisors on a regular $n$-dimensional  Rapoport-Zink formal scheme
\[
M = M_{(1,0)} \times_{\Spf ( \breve{\co}_\kk) }  M_{(n-1,1)},
\]
flat over $\Spf ( \breve{\co}_\kk)$.  
We have imposed Hypothesis \ref{hyp:unramified}  because it is assumed in \cite{pappas} and \cite{kramer}, the results of which are needed to prove the flatness and regularity of  $M$.

The construction of $M$ depends on the choices of supersingular $p$-divisible groups $\bm{X}_0$ and $\bm{X}$ of dimensions $1$ and $n\ge 2$, respectively,  defined over the residue field $\breve{\co}_\kk / \breve{\mathfrak{m}}$ and endowed with principal polarizations and actions of $\co_\kk$.
The induced actions of $\co_\kk$ on the Lie algebras $\Lie(\bm{X}_0)$ and $\Lie(\bm{X})$ are required to satisfy signature conditions of type $(1,0)$ and $(n-1,1)$ respectively.

The precise assumptions on $\bm{X}_0$ and $\bm{X}$, along with the definition of $M$,  are explained  in \S \ref{s:RZ}. 
We note here only that the signature condition on $\bm{X}$ consists of the extra data of a codimension one subspace
$
F_{\bm{X}} \subset \Lie(\bm{X})
$
as in the work of Kr\"amer \cite{kramer}.  In particular, when $\kk/\Q_p$ is ramified our formal scheme $M_{(n-1,1)}$ does not agree with the one considered in \cite{RTW}.

As in \cite{KR-unitary-1}, the $n$-dimensional $\kk$-vector space 
\[
V = \Hom_{\co_\kk}(\bm{X}_0, \bm{X}) [1/p]
\]
 carries a natural hermitian form, and every nonzero vector $x\in V$ determines a \emph{Kudla-Rapoport divisor}  $Z(x) \subset M$; see Definition \ref{def:KR divisor}.
Our  main result concerns  arbitrary intersections of Kudla-Rapoport divisors,  including self-intersections.

For any nonzero $x\in V$, let $I_{Z(x)} \subset \co_M$ be the ideal sheaf defining $Z(x)$, and define a chain complex of locally free $\co_M$-modules
\[
C(x) = (   \cdots \to 0 \to I_{Z(x)}  \to \co_M\to 0  )
\]
supported in degrees $1$ and $0$.  We extend the definition to $x=0$ by setting 
\[
C(0) = (   \cdots \to 0 \to \omega  \map{0} \co_M\to 0  ),
\]
where $\omega$ is the line bundle of modular forms on $M$ of Definition \ref{def:modular forms}.  
This line bundle controls  the deformation theory of the Kudla-Rapoport divisors, in a sense made (somewhat) more precise in \S \ref{s:deformation}.

The following is our main result.  It is stated in the text as Theorem \ref{thm:linear invariance}.

\begin{bigtheorem}\label{main result}
Fix an $r \ge 0$, and  suppose $x_1,\ldots, x_r \in V$ and $y_1,\ldots,y_r \in V$ generate the same $\co_\kk$-submodule.
For every $i\ge 0$ there is an isomorphism of  coherent  $\co_M$-modules 
\[
 H_i ( C(x_1) \otimes \cdots \otimes C(x_r) )   \iso  H_i ( C(y_1) \otimes \cdots \otimes C(y_r) )  .
\]
\end{bigtheorem}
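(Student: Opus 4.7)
My plan is to identify $C(x)$ with a single Koszul-type complex that depends $\co_\kk$-linearly on $x\in V$, and then deduce the theorem from the $\GL_r(\co_\kk)$-invariance of Koszul complexes.

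The first step, which relies on the deformation-theoretic content of \S\ref{s:deformation}, is to construct a $\co_\kk$-linear map
\[
\delta\colon V \longrightarrow \Hom_{\co_M}(\omega,\co_M)
\]
(where $\co_\kk$ acts on the target through one of the two embeddings $\varphi,\overline{\varphi}$) such that for every $x\in V$ the complex $C(x)$ is canonically the complex $(\omega\xrightarrow{\delta(x)}\co_M)$. For $x=0$ this matches the stated definition, and for $x\ne 0$ it requires promoting the isomorphism $I_{Z(x)}\iso\omega$ implicit in the deformation theory of a single Kudla-Rapoport divisor to an isomorphism that varies naturally with $x$. Under this identification the tensor product $C(x_1)\otimes\cdots\otimes C(x_r)$ becomes the honest Koszul complex of the tuple $(\delta(x_1),\ldots,\delta(x_r))$, with degree-$k$ term $\bigwedge^{k}\omega^{\oplus r}$ and differential given by contraction.

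The second step is elementary linear algebra over the discrete valuation ring $\co_\kk$. The common submodule $L\subset V$ generated by the $x_i$'s and the $y_i$'s is torsion-free, hence free, so each of the surjections $\co_\kk^{\oplus r}\twoheadrightarrow L$ coming from $(x_i)$ and $(y_i)$ has a free direct-summand kernel, and the two kernels have the same rank. Any two free direct summands of $\co_\kk^{\oplus r}$ of equal rank are $\GL_r(\co_\kk)$-conjugate (choose complementary summands and transport bases), producing a matrix $A=(a_{ij})\in\GL_r(\co_\kk)$ with $y_i=\sum_j a_{ji}x_j$ for all $i$.

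With these two steps in place the conclusion is formal: by $\co_\kk$-linearity of $\delta$ we obtain $\delta(y_i)=\sum_j \varphi(a_{ji})\delta(x_j)$, so the matrix $\varphi(A)\in\GL_r(\co_M)$ intertwines the two tuples of sections, and the induced automorphism $\bigwedge^{\bullet}\varphi(A)$ of $\bigwedge^\bullet\omega^{\oplus r}$ identifies the two Koszul differentials, yielding an isomorphism of complexes $C(x_1)\otimes\cdots\otimes C(x_r)\iso C(y_1)\otimes\cdots\otimes C(y_r)$ and in particular of every $H_i$. The principal obstacle is the first step: one must produce a single coherent linearization $\delta$ across all of $V$ from the deformation theory of Kudla-Rapoport divisors, rather than merely a compatibility up to isomorphism for individual vectors. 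Once this geometric input is secured the remaining linear algebra is entirely formal.
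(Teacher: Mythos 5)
Your step 2 (reducing to a single matrix in $\GL_r(\co_\kk)$, equivalently to elementary row operations) is correct and matches the opening reduction in the paper's proof of Theorem \ref{thm:linear invariance}. The problem is step 1, and it is not a technicality that can be ``secured'' later: the global $\co_\kk$-linear map $\delta\colon V\to H^0(M,\omega^{-1})$ with $\mathrm{div}(\delta(x))=Z(x)$ is precisely the object the paper points out need not exist (see the discussion around (\ref{pretend sections}) in the introduction). What Grothendieck--Messing theory actually produces is the obstruction section $\mathrm{obst}(x)$, a section of $\omega^{-1}$ defined only over the first order infinitesimal neighborhood $\widetilde{Z}(x)$ of $Z(x)$, not over $M$; equivalently one gets a canonical isomorphism $I_{Z(x)}/I_{Z(x)}^2\iso\omega|_{Z(x)}$ of conormal bundles, but no canonical --- nor, in general, any --- global isomorphism $I_{Z(x)}\iso\omega$. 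The line bundles $\co_M(-Z(x))$ for varying $x$ have no reason to be isomorphic to the single fixed line bundle $\omega$, let alone compatibly with all $\co_\kk$-linear relations among the $x$'s; and the linearity relation $t_1=s_1+as_2$ among obstruction sections holds only after restriction to the first order neighborhood of $Z(x_1)\cap\cdots\cap Z(x_r)$.

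The entire technical content of the paper's proof is the workaround for exactly this failure. Over each member $U$ of a suitable Zariski open cover one chooses noncanonical lifts $\sigma_i\in H^0(U,\omega_U^{-1})$ of the obstruction sections, subject to the constraints of Lemma \ref{lem:obstruction lifts}; these produce an isomorphism of complexes over $U$ essentially as in your Koszul picture (Lemma \ref{lem:wee complex}), but the isomorphism depends on the choices and does not glue as $U$ varies. The key point, which occupies most of Lemmas \ref{lem:obstruction lifts} and \ref{lem:wee complex}, is that changing the choices changes the isomorphism only by an explicit homotopy, so the induced maps on homology sheaves are choice-independent and therefore do glue. This is also why the theorem is stated as an isomorphism of each $H_i$ rather than of the complexes themselves. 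As written, your argument omits this entire mechanism, and the omitted step is the heart of the proof rather than a formality.
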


We can restate our main result in terms of  the Grothendieck group of coherent sheaves on $M$. 
Let  $K_0'(M)$ be the free abelian group  generated by symbols $[F]$ as $F$ runs over all isomorphism classes of coherent $\co_M$-modules, subject to the relations 
$[F_1]+[F_3] = [F_2]$ whenever there is a short exact sequence
\[
0 \to F_1\to F_2\to F_3 \to 0.
\]

In particular,  any bounded chain complex $F$  of coherent $\co_M$-modules defines a class
$
[F] = \sum_i (-1)^i \cdot [ H_i( F )]  \in K_0'(M),
$
allowing us to form 
\begin{equation}\label{grothendieck tensor}
[ C(x_1) \otimes \cdots \otimes C(x_r)] \in K_0'(M)
\end{equation}
for any finite list of vectors $x_1,\ldots, x_r\in V$.
If all $x_1,\ldots, x_r$ are nonzero, then
\[
[ C(x_1) \otimes \cdots \otimes C(x_r)] = [ \co_{Z(x_1)} \otimes^{\mathbf{L}} \cdots \otimes^{\mathbf{L}} \co_{ Z(x_r)} ] ,
\]
and hence one should regard (\ref{grothendieck tensor}) as a  generalized intersection of divisors.
On the right-hand side,  by slight abuse of notation, we are using the pushforward via $Z(x_i) \hookrightarrow M$ to  view  $\co_{Z(x_i)}$ as a coherent sheaf on $\co_M$, and $\otimes^{\mathbf{L}}$ is the derived tensor product of coherent $\co_M$-modules.

The following   is an immediate consequence of Theorem \ref{main result}.

\begin{bigcorollary}\label{bigcorGrothendieck}
If  $x_1,\ldots, x_r \in V$ and $y_1,\ldots, y_r \in V$ generate the same $\co_\kk$-submodule, then
\[
[ C(x_1)\otimes\cdots \otimes C(x_r) ]  = [ C(y_1)\otimes\cdots \otimes C(y_r) ] .
\]
\end{bigcorollary}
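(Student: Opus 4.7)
The plan is essentially bookkeeping: Theorem \ref{main result} supplies isomorphisms of coherent sheaves on $M$, and I would translate these into equalities of classes in $K_0'(M)$ using the alternating-sum formula for the class of a bounded complex.

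First I would invoke the definition recalled just above the corollary, which asserts that for any bounded chain complex $F$ of coherent $\co_M$-modules,
\[
[F] = \sum_i (-1)^i \cdot [H_i(F)] \in K_0'(M).
\]
Applying this formula to both $F = C(x_1)\otimes\cdots\otimes C(x_r)$ and $F = C(y_1)\otimes\cdots\otimes C(y_r)$, the desired equality reduces to checking that, for every $i\ge 0$,
\[
[H_i(C(x_1)\otimes\cdots\otimes C(x_r))] = [H_i(C(y_1)\otimes\cdots\otimes C(y_r))]
\]
in $K_0'(M)$. Both tensor products are bounded complexes of locally free modules of finite length, so this sum is finite and the reduction is legitimate.

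Next I would use the elementary observation that isomorphic coherent sheaves determine the same class in $K_0'(M)$: if $F_1 \iso F_2$, then the short exact sequence $0\to F_1\to F_2\to 0\to 0$ (with the given isomorphism as the middle map) yields $[F_2] = [F_1]$ from the defining relations of $K_0'(M)$. Theorem \ref{main result} produces exactly such an isomorphism on each homology sheaf $H_i$, so the two classes above agree term by term. Summing over $i$ with the signs $(-1)^i$ then gives Corollary \ref{bigcorGrothendieck}.

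There is no genuine obstacle at this stage: the corollary is a purely formal restatement of Theorem \ref{main result} after passing to the Grothendieck group. All the substantive work, namely the construction of isomorphisms between homology sheaves under a change of generators of the $\co_\kk$-submodule, is already absorbed into Theorem \ref{main result}; the passage to $K_0'(M)$ uses only the two formal facts (i) $[F] = \sum_i (-1)^i [H_i(F)]$ for bounded complexes and (ii) invariance of the class under isomorphism.
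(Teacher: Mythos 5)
Your proposal is correct and matches the paper's intent exactly: the paper states Corollary \ref{bigcorGrothendieck} as an immediate consequence of Theorem \ref{main result}, and the formal bookkeeping you supply --- the alternating-sum formula for the class of a bounded complex together with invariance of $K_0'(M)$-classes under isomorphism of coherent sheaves --- is precisely the intended (and only needed) argument.
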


Perhaps the most interesting aspect of Corollary \ref{bigcorGrothendieck} is that it encodes nontrivial information about self-intersections of Kudla-Rapoport divisors.
To spell this out in the simplest  case, note that Corollary \ref{bigcorGrothendieck} implies
\begin{equation}\label{adjunction 1}
[ C(x) \otimes C(x) ] = [ C(x) \otimes  C(0)]
\end{equation}
for any  nonzero $x\in V$.  
The right hand side is  the alternating sum in $K_0'(M)$ of the homology of the complex
\[
\cdots \to 0 \to I_{Z(x)} \otimes \omega \map{\partial_2} I_{Z(x)} \oplus \omega \map{\partial_1} \co_M \to 0,
\]
where $\partial_2( a\otimes b) = ( 0,  ab )$ and  $\partial_1( a,b) =a$, and so
\[
[ C(x) \otimes C(0) ] = [ \co_M / I_{Z(x)} ] - [\omega/ I_{Z(x)} \omega ].
\]
If we again use pushforward via $ Z(x) \hookrightarrow M$ to view coherent $\co_{Z(x)}$-modules as coherent $\co_M$-modules, then (\ref{adjunction 1}) can  be rewritten as a self-intersection formula
\begin{equation}\label{adjunction 2}
[ \co_{Z(x)} \otimes^{\mathbf{L}}  \co_{ Z(x)} ] = [  \co_{Z(x)}] - [ \omega|_{ Z(x) } ] .
\end{equation}

 Because of the close connection between Grothendieck groups of coherent sheaves and Chow groups, as detailed in \cite[Chapter I]{soule92}, the global analogue of Corollary \ref{bigcorGrothendieck} has applications to conjectures of Kudla-Rapoport \cite{KR-unitary-2} on the intersection multiplicities of cycles on unitary Shimura varieties, and their connection  to derivatives of Eisenstein series.  This will be explored in forthcoming work of the author.

The formal $\breve{\co}_\kk$-scheme $M$ is locally formally of finite type, but has countably many connected components, each of which is a countable union of irreducible components.  
Let us fix one connected component $M^\circ \subset M$, and set $Z^\circ(x) = Z(x)|_{M^\circ}$.   
The following   is an immediate consequence of Theorem \ref{main result}.

\begin{bigcorollary}\label{bigcorMultiplicity}
Suppose   $x_1,\ldots, x_n \in V$ is a $\kk$-basis.  The Serre intersection multiplicity
\begin{eqnarray}\lefteqn{
\chi\big(    \co_{Z^\circ(x_1)}  \otimes^{\mathbf{L}} \cdots \otimes^{\mathbf{L}}  \co_{Z^\circ(x_n)}  \big) } \nonumber  \\
& \define &\sum_{i,j \ge 0} (-1)^{ i+j } \, 
\mathrm{length}_{\breve{\co}_\kk} H^j \big( M^\circ ,   H_i\big(    \co_{Z^\circ(x_1)}  \otimes^{\mathbf{L}} \cdots \otimes^{\mathbf{L}}  \co_{Z^\circ(x_n)}  \big)   \big) \nonumber
\end{eqnarray}
depends only on the $\co_\kk$-lattice spanned by  $x_1,\ldots, x_n$.
\end{bigcorollary}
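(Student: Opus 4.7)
The plan is to apply Corollary \ref{bigcorGrothendieck} after restricting everything to the connected component $M^\circ$, and then to observe that the Serre intersection multiplicity is an additive invariant of the resulting class in $K_0'(M^\circ)$.

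First, since $M^\circ \subset M$ is open and closed, restriction of coherent sheaves from $M$ to $M^\circ$ is exact and commutes with derived tensor products. The isomorphisms of Theorem \ref{main result} therefore descend to give, for any $x_1,\ldots,x_r$ and $y_1,\ldots,y_r$ in $V$ spanning the same $\co_\kk$-submodule,
\[
 H_i \bigl( C(x_1) \otimes \cdots \otimes C(x_r) \bigr) \big|_{M^\circ} \iso  H_i \bigl( C(y_1) \otimes \cdots \otimes C(y_r) \bigr) \big|_{M^\circ}
\]
for every $i\ge 0$. Taking $r=n$ with $x_1,\ldots,x_n$ and $y_1,\ldots,y_n$ both $\kk$-bases (hence all nonzero) that generate a common $\co_\kk$-lattice, the identification $[C(x_1)\otimes\cdots\otimes C(x_n)] = [\co_{Z(x_1)}\otimes^{\mathbf{L}}\cdots\otimes^{\mathbf{L}}\co_{Z(x_n)}]$ from the introduction yields
\[
[ \co_{Z^\circ(x_1)}  \otimes^{\mathbf{L}}  \cdots \otimes^{\mathbf{L}}  \co_{Z^\circ(x_n)}  ] = [ \co_{Z^\circ(y_1)} \otimes^{\mathbf{L}}  \cdots  \otimes^{\mathbf{L}}   \co_{Z^\circ(y_n)}  ]
\]
in $K_0'(M^\circ)$.

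Next I would introduce the Euler-characteristic functional. For any coherent $\co_{M^\circ}$-module $F$ whose support is proper over $\Spf(\breve{\co}_\kk)$ and annihilated by a power of $p$, each cohomology group $H^j(M^\circ, F)$ has finite length over $\breve{\co}_\kk$ and vanishes outside a bounded range of $j$. The alternating sum of these lengths is additive in short exact sequences via the long exact cohomology sequence, so it defines a group homomorphism $\chi$ from the subgroup of $K_0'(M^\circ)$ generated by such classes to $\Z$. By construction, $\chi$ applied to $[\co_{Z^\circ(x_1)}\otimes^{\mathbf{L}}\cdots\otimes^{\mathbf{L}}\co_{Z^\circ(x_n)}]$ is precisely the Serre intersection multiplicity in the statement, so the desired invariance reduces to the $K_0'(M^\circ)$-equality just displayed together with the assertion that both of its sides lie in the domain of $\chi$.

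The main obstacle is verifying this last properness assertion. The homology sheaves involved are set-theoretically supported on the intersection $Z^\circ(x_1) \cap \cdots \cap Z^\circ(x_n)$, which parametrizes deformations of $(\bm{X}_0,\bm{X})$ admitting the $n$ linearly independent quasi-homomorphisms $x_1,\ldots, x_n$ simultaneously. Because the hermitian matrix of a basis is non-degenerate, this forces the universal deformation of $\bm{X}$ to carry a bounded collection of homomorphisms whose $\kk$-span is all of $V$, which pins the reduced underlying space down to finitely many closed points of $M^\circ$; thus the support is Artinian and in particular proper over $\Spf(\breve{\co}_\kk)$. Once this is known, applying $\chi$ to both sides of the $K_0'(M^\circ)$-equality completes the proof.
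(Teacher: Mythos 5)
Your overall strategy works, but it is both more roundabout than necessary and contains one concretely false step.

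The unnecessary detour: you pass to $K_0'(M^\circ)$ and invoke additivity of the Euler characteristic, which forces you to establish that the classes lie in the domain of $\chi$ (properness and finite length). The paper treats the corollary as immediate from Theorem \ref{main result} for a simpler reason: the theorem gives an isomorphism of each individual homology sheaf $H_i(C(x_1)\otimes\cdots\otimes C(x_n)) \iso H_i(C(y_1)\otimes\cdots\otimes C(y_n))$, and since the $x_i$ are all nonzero and each $Z(x_i)$ is a Cartier divisor (Proposition \ref{prop:cartier}), these complexes are locally free resolutions computing the derived tensor products. Restricting to the open and closed subset $M^\circ$ and applying $H^j(M^\circ,-)$ then identifies every summand $\mathrm{length}_{\breve{\co}_\kk} H^j(M^\circ, H_i(\cdots))$ for the $x$'s with the corresponding summand for the $y$'s, term by term. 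The invariance of the total sum follows with no appeal to additivity in $K_0'$ and, crucially, with no need to know that any of these lengths is finite.

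The false step: your justification of properness asserts that the common intersection $Z^\circ(x_1)\cap\cdots\cap Z^\circ(x_n)$ of the divisors attached to a basis has reduced support equal to finitely many closed points. This is true in the unramified case but is explicitly contradicted by the paper in the ramified case: the introduction, using Proposition \ref{prop:lots of exceptional}, constructs a basis $x_1,\ldots,x_n$ and an effective Cartier divisor $D\iso\mathbb{P}^{n-1}$ contained in the special fiber with $D\subset Z(x_1)\cap\cdots\cap Z(x_n)$. So the support can have dimension $n-1$, and the non-degeneracy of the hermitian matrix of a basis does not pin the intersection down to an Artinian scheme. (A component such as $\mathbb{P}^{n-1}$ over $\breve{\F}$ is still proper, so finiteness of the lengths is not obviously lost, but your argument for it fails and would need to be replaced by a genuine quasi-compactness statement that the paper does not supply and does not need.) Since the direct termwise comparison sketched above sidesteps the finiteness question entirely, the cleanest repair is to drop the $K_0'$ detour rather than to fix the properness claim.
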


It is conjectured by Kudla-Rapoport that the intersection multiplicity appearing in Corollary \ref{bigcorMultiplicity} is related to derivatives of representation densities.  
When $\kk/\Q_p$ is unramified  this is  \cite[Conjecture 1.3]{KR-unitary-1}.  
When $\kk/\Q_p$ is ramified it is  perhaps not clear what the precise statement of the conjecture should be.

 
 
 \bigskip \noindent \textbf{Relation to previous results.}
Weaker versions of the results stated above can be proved using a  simpler argument\footnote{Terstiege only considers the case $n=3$, but his argument generalizes to all $n$.}  of Terstiege   \cite{terstiege}.    
We  clarify here what one can and cannot prove using that argument.

When $\kk/\Q_p$ is unramified,  Corollary \ref{bigcorMultiplicity} is  \cite[Proposition 3.2]{terstiege}.  
Terstiege's argument  can also be used to prove Theorem \ref{main result} and Corollary \ref{bigcorGrothendieck}, but only under the additional assumption that the vectors $x_1,\ldots, x_r$ (equivalently, $y_1,\ldots, y_r$) are linearly independent.  
 In particular, his argument does not give self-intersection formulas like (\ref{adjunction 1}) and (\ref{adjunction 2}).

The key thing that makes  Terstiege's argument work is that,  in the unramified case, the Kudla-Rapoport divisors $Z(x)$ and $Z(x')$ defined by linearly independent vectors $x,x'\in V$ are flat over $\breve{\co}_\kk$,  from which it follows that their intersection $Z(x) \cap Z(x')$ lies in   codimension $2$.

When $\kk/\Q_p$ is ramified  the situation is very different: the Kudla-Rapoport divisors are usually not flat, and the  intersection  $Z(x) \cap Z(x')$ is often  of codimension $1$.    In fact, it is easy to see using Proposition \ref{prop:lots of exceptional} that one can  construct a basis $x_1,\ldots, x_n\in V$  and an effective  Cartier divisor $D \subset M$,  contained in the special fiber (in the sense that the structure sheaf $\co_D$ is annihilated by a uniformizer in $\co_\kk$),  such that 
 \[
 D\subset Z(x_1) \cap \cdots \cap Z(x_n) .
 \]
Because of this, the argument used by Terstiege breaks down in a fundamental way when $\kk/\Q_p$ is ramified, and seems to yield little information in the direction of Theorem \ref{main result} and its corollaries.

 
 
 \bigskip \noindent \textbf{The strategy of the proof.}
 To explain the key idea underlying the proof of Theorem \ref{main result},  suppose we have  vectors $x_1,x_2,y_1,y_2 \in V$ related by 
\[
 y_1 = x_1 + ax_2 ,\qquad  y_2  = x_2
\]
 for some $a\in \co_\kk$.   In particular, $\{x_1,x_2\}$ and $\{y_1,y_2\}$ generate the same $\co_\kk$-submodule of $V$.

 One should imagine that there are global sections 
 \begin{equation}\label{pretend sections}
 s_1,s_2,t_1,t_2 \in H^0( M , \omega^{-1}) 
 \end{equation}
 satisfying $ \mathrm{div}(s_i) = Z(x_i)$ and  $\mathrm{div}(t_i) = Z(y_i)$, and also satisfying 
\begin{equation}\label{linear sections}
 t_1 = s_1 + a s_2 ,\qquad
 t_2  = s_2.
\end{equation}
Such sections  would determine complexes
\begin{align*}
D(x_i)  &= (   \cdots \to 0 \to \omega  \map{s_i} \co_M\to 0  )  \\
D(y_i)  &= (   \cdots \to 0 \to \omega  \map{t_i} \co_M\to 0  ) ,
\end{align*}
along with canonical isomorphisms
\[
C(x_i) \iso D(x_i) ,\qquad C(y_i) \iso D(y_i).
\]
 Indeed,  if $x_i \neq 0$ then 
\begin{equation*}
\xymatrix{
{\cdots}\ar[r]  & 0 \ar[r] \ar@{=}[d] & {\omega} \ar[r]^{ s_i}  \ar[d]^{s_i} & { \co_M} \ar[r] \ar@{=}[d]& 0  \\
{\cdots}\ar[r]   & 0 \ar[r] & { I_{Z(x_i) } }  \ar[r] & { \co_M } \ar[r]& 0
}
\end{equation*}
defines  an isomorphism $D(x_i)\iso C(x_i)$.   If $x_i=0$ then $s_i=0$,  and   $C(x_i)$ and $D(x_i)$  are equal simply by definition.
The point of replacing the complexes $C(\cdot)$ by the isomorphic complexes $D(\cdot)$  is that the relations 
 (\ref{linear sections}) induce relations amongst the $D(\cdot)$, which allow one  to write down (see the proof of Lemma \ref{lem:wee complex}) an explicit isomorphism
 \[
 D(x_1)\otimes D(x_2) \iso D(y_1)\otimes D(y_2).
 \]
In this way one would obtain from (\ref{pretend sections}) an isomorphism of complexes
  \begin{equation}\label{pretend complexes}
 C(x_1)\otimes C(x_2) \iso C(y_1)\otimes C(y_2).
 \end{equation}
 
 Unfortunately,  sections (\ref{pretend sections}) with the required properties need not exist globally on $M$, and so neither does the isomorphism (\ref{pretend complexes}).
 Instead, our approach is to use Grothendieck-Messing theory to construct sections $s_i$ and $t_i$  defined only on the first order infinitesimal neighborhoods of $Z(x_i)$ and $Z(y_i)$ in $M$.   
 Working on a sufficiently fine Zariski open cover $\mathcal{U}$ of $M$, we then choose local approximations of these sections, and so obtain, by the method above, an isomorphism 
   \begin{equation}\label{local pretend complexes}
  C(x_1)_{U}\otimes C(x_2)_{U} \iso C(y_1)_{U}\otimes C(y_2)_{U}
 \end{equation}
 over each $U\in \mathcal{U}$.  
 Because there is no canonical way to choose these local approximations, the isomorphisms (\ref{local pretend complexes}) need not glue together as $U\in \mathcal{U}$ varies.
However,  if one imposes  mild restrictions on the local approximations, the homotopy class of  (\ref{local pretend complexes}) is independent of the choices. 
  The resulting isomorphisms
 \[
  H_i(  C(x_1) \otimes C(x_2) ) _U   \iso  H_i ( C(y_1)   \otimes C(y_2) ) _U 
 \]
of $\co_U$-modules can therefore be glued together as $U\in \mathcal{U}$ varies.


\section{The Rapoport-Zink space and its divisors}
\label{s:RZ}


Fix a triple $(\bm{X}_0 , \bm{i}_0 , \bm{\lambda_0 } )$ in which
\begin{itemize}
\item
 $\bm{X}_0$ is a supersingular $p$-divisible group over $\breve{\co}_\kk/\breve{\mathfrak{m}}$ of  dimension $1$,
\item
$\bm{i}_0 : \co_\kk \to \End(\bm{X}_0)$ is an action of $\co_\kk$ on $\bm{X}_0$ such that  the induced action on $\Lie(\bm{X}_0)$ is through the inclusion $\varphi : \co_\kk \to \breve{\co}_\kk$,
\item
$\bm{\lambda}_0 : \bm{X}_0 \to \bm{X}_0^\vee$  is a principal polarization compatible with the $\co_\kk$-action, in the sense that the induced Rosati involution $\dagger$ satisfies
$
\bm{i}_0(\alpha)^\dagger = \bm{i}_0(\overline{\alpha})
$
  for all $\alpha\in \co_\kk$.
\end{itemize}

From the above data one can construct a Rapoport-Zink formal scheme by specifying its functor of points.
Let  $\mathrm{Nilp}$ the category of $\breve{\co}_\kk$-schemes on which $p$ is locally nilpotent.  
For each $S\in \mathrm{Nilp}$ let $M_{(1,0)}(S)$ be the set of isomorphism classes of quadruples
$(X_0,i_0,\lambda_0 ,\varrho_0)$ in which 
\begin{itemize}
\item
$X_0$ is a $p$-divisible group  over $S$ of dimension $1$, 
\item
$i_0 : \co_\kk \to \End(X_0)$ is an action of $\co_\kk$ on $X_0$ such that  the induced action on $\Lie(X_0)$ is through the inclusion $\varphi:\co_\kk\to \breve{\co}_\kk$,

\item
$\lambda_0 : X_0 \to X_0^\vee$  is a principal polarization compatible with $\co_\kk$-action in the sense above, 
\item
$\varrho_0 :  X_0 \times_S\overline{S} \to \bm{X}_0\times_{\Spec(\breve{\co}_\kk/\breve{\mathfrak{m}})}  \overline{S}$ is  an $\co_\kk$-linear quasi-isogeny,  respecting polarizations up to scaling by $\Q_p^\times$.
Here  
\[
\overline{S} = S \times_{ \Spec(\breve{\co}_\kk) } \Spec(\breve{\co}_\kk/\breve{\mathfrak{m}} ).
\]
\end{itemize}
An isomorphism between two such tuples is an $\co_\kk$-linear isomorphism of $p$-divisible groups 
$X_0 \iso X_0'$  identifying $\varrho_0$ with $\varrho'_0$, and identifying $\lambda_0$ with $\lambda_0'$ up to $\Z_p^\times$-scaling.

\begin{proposition}
The functor $M_{(1,0)}$ is represented by a countable disjoint union of copies of  $\Spf(\breve{\co}_\kk)$. 
 \end{proposition}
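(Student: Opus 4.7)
\medskip
The plan is to prove the proposition in two steps: (i) classify the set of $k$-valued points of $M_{(1,0)}$, where $k = \breve{\co}_\kk/\breve{\mathfrak{m}}$, and (ii) show that the formal neighborhood at each such point is isomorphic to $\Spf(\breve{\co}_\kk)$.

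For step (i), I would appeal to rigidity: up to isomorphism there is a unique supersingular one-dimensional $p$-divisible group over the algebraically closed field $k$ carrying an $\co_\kk$-action of signature $(1,0)$ (for unramified $\kk$ this is Lubin--Tate, and for ramified $\kk$ with $p>2$ it is the analogous classification result). Hence any $(X_0, i_0)$ in the data of a $k$-point may be identified with $(\bm{X}_0, \bm{i}_0)$; under this identification the principal polarization $\lambda_0$ differs from $\bm{\lambda}_0$ by an $\co_\kk^\times$-scalar whose norm lies in $\Z_p^\times$, so $\lambda_0$ equals $\bm{\lambda}_0$ modulo the allowed $\Z_p^\times$-scaling. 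The remaining datum $\varrho_0$ becomes an element of $\End_{\co_\kk}(\bm{X}_0)[1/p]^\times$. Since $\bm{X}_0$ is supersingular, its rational endomorphism algebra is a quaternion algebra over $\Q_p$ in which $\kk$ sits as a maximal subfield, so the centralizer $\End_{\co_\kk}(\bm{X}_0)[1/p]$ is exactly $\kk$. The compatibility $\varrho_0\overline{\varrho}_0 \in \Q_p^\times$ is automatic for $\varrho_0\in \kk^\times$. Modding out by the automorphism group of $(\bm{X}_0, \bm{i}_0, \bm{\lambda}_0)$ with polarization remembered only up to $\Z_p^\times$-scaling (which is $\co_\kk^\times$, since $\alpha\bar\alpha\in\Z_p^\times$ for every $\alpha\in\co_\kk^\times$) gives a bijection
\[
M_{(1,0)}(k) \longleftrightarrow \co_\kk^\times \backslash \kk^\times \iso \Z
\]
via the $\kk$-adic valuation; in particular $M_{(1,0)}(k)$ is countable.

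For step (ii), I would run Grothendieck--Messing. Given a PD-thickening $S \hookrightarrow S'$ in $\mathrm{Nilp}$ and a point of $M_{(1,0)}(S)$, lifts to $M_{(1,0)}(S')$ correspond to $\co_\kk$-stable isotropic lifts of the Hodge filtration inside the evaluation of the Dieudonn\'e crystal on $S'$. The signature $(1,0)$ condition demands that $\Lie(X_0)$ be a locally free $\co_{S'}$-module of rank one on which $\co_\kk$ acts through $\varphi$, and this pins down a unique rank-one direct summand of the lifted Dieudonn\'e module, which is automatically isotropic with respect to the polarization pairing. The lift therefore exists and is unique, so $M_{(1,0)}$ is formally \'etale over $\Spf(\breve{\co}_\kk)$. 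Because each point of $M_{(1,0)}(k)$ has residue field $k$ itself, its formal neighborhood must be $\Spf(\breve{\co}_\kk)$. Combining with step (i), $M_{(1,0)}$ is the disjoint union of countably many copies of $\Spf(\breve{\co}_\kk)$.

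The main obstacle is step (i), and more precisely the rigidity input in the ramified case. The uniqueness of the triple $(\bm{X}_0, \bm{i}_0, \bm{\lambda}_0)$ up to isomorphism over $k$ (modulo $\Z_p^\times$-scaling of the polarization) is classical when $\kk/\Q_p$ is unramified, but in the ramified case requires the hypothesis $p>2$ to avoid pathologies with the signature condition; this is the point at which Hypothesis \ref{hyp:unramified} really intervenes. Step (ii) is, by contrast, essentially formal, because the signature $(1,0)$ condition admits no auxiliary refinement (unlike the signature $(n-1,1)$ case, which requires the extra subspace datum $F_{\bm{X}}$ of \cite{kramer}).
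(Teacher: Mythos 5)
Your argument is correct and takes essentially the same route as the paper, whose proof simply cites Lubin--Tate theory (or \cite[Theorem 2.1.3]{Ho-KR-1}) for the rigidity of the deformation space and the $d=1$ case of \cite[Proposition 3.79]{rapoport-zink} for the enumeration of the components --- exactly the two steps you spell out. The one small inaccuracy is your closing remark that $p>2$ is needed for the signature $(1,0)$ rigidity in the ramified case: the classification and deformation theory of formal $\co_\kk$-modules of $\co_\kk$-height one work for all $p$, and Hypothesis \ref{hyp:unramified} is imposed for the sake of $M_{(n-1,1)}$ (flatness, regularity, and the local models of Pappas and Kr\"amer), not for $M_{(1,0)}$.
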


\begin{proof}
The formal deformation space of the triple $(\bm{X}_0, \bm{i}_0,\bm{\lambda}_0)$ is $\Spf(\breve{\co}_\kk)$.
This can be proved using Lubin-Tate theory.  Alternatively, it is a special case of 
 \cite[Theorem 2.1.3]{Ho-KR-1}, which applies to more general $p$-divisible groups with complex multiplication.
With this fact in mind, the proof is the same as the $d=1$ case of \cite[Proposition 3.79]{rapoport-zink}.
\end{proof}

Now fix a tuple $(\bm{X} , \bm{i} ,  \bm{\lambda} ,F_{\bm{X}})$ in which 
\begin{itemize}
\item
$\bm{X}$ is a supersingular $p$-divisible group  over $ \breve{\co}_\kk/\breve{\mathfrak{m}}$ of  dimension $n$, 
\item
$\bm{i} : \co_\kk \to \End(\bm{X})$ is an action of $\co_\kk$ on $\bm{X}$,
\item
$\bm{\lambda} : \bm{X} \to \bm{X}^\vee$ is a principal polarization compatible with the $\co_\kk$-action in the sense above,
\item
$F_{\bm{X}} \subset \Lie(\bm{X})$ is an  $\breve{\co}_\kk/\breve{\mathfrak{m}}$-module direct summand of rank $n-1$ satisfying Kr\"amer's  \cite{kramer} signature condition:
 the action of  $\co_\kk$ on $\Lie(\bm{X})$ induced by $\bm{i} : \co_\kk \to \End(\bm{X})$ stabilizes $F_{\bm{X}}$, and acts on $F_{\bm{X}}$ and $\Lie(\bm{X})/F_{\bm{X}}$  through 
$\varphi, \overline{\varphi}: \co_\kk \to \breve{\co}_\kk$,  respectively.
\end{itemize}

For each $S\in \mathrm{Nilp}$ let $M_{(n-1,1)}(S)$ be the set of isomorphism classes of tuples $(X,i,\lambda, F_X,\varrho)$ in which 
\begin{itemize}
\item
$X$ is a $p$-divisible group  over $S$ of dimension $n$, 
\item
$ i : \co_\kk \to \End(X)$ is an action of $\co_\kk$ on $X$,  
\item
$\lambda : X \to X^\vee$ is a principal polarization compatible with the $\co_\kk$-action in the sense above,
\item
$F_X \subset \Lie(X)$ is a local $\co_S$-module local direct summand  of rank $n-1$ satisfying Kr\"amer's signature condition as above,
\item
 $\varrho :  X\times_S {\overline{S}}  \to \bm{X} \times_{  \Spec(\breve{\co}_\kk / \breve{\mathfrak{m}} )  }  \overline{S}$ is an $\co_\kk$-linear quasi-isogeny respecting polarizations up to scaling by $\Q_p^\times$.
 \end{itemize}
An isomorphism between two such tuples is an $\co_\kk$-linear isomorphism of $p$-divisible groups 
$X \iso X'$  identifying $F_X$ with  $F_{X'}$, identifying $\varrho$ with $\varrho'$, and identifying $\lambda$ with $\lambda'$ up to  $\Z_p^\times$-scaling.

\begin{proposition}
The functor $M_{(n-1,1)}$ is represented by a formal $\breve{\co}_\kk$-scheme, locally formally of finite type.
Moreover
\begin{enumerate}
\item
$M_{(n-1,1)}$ is flat over $\breve{\co}_\kk$, and regular of dimension $n$;
\item
if $\kk/\Q_p$ is unramified then $M$ is formally smooth over $\breve{\co}_\kk$.
\end{enumerate}
\end{proposition}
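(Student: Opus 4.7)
The plan is to establish representability via the Rapoport--Zink framework of \cite{rapoport-zink}, and then to transfer the geometric properties (1) and (2) from a local model whose structure is controlled by the results of Pappas \cite{pappas} and Kr\"amer \cite{kramer}.

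\smallskip

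\noindent \textbf{Representability.} First I would introduce the auxiliary ``naive'' functor $M^{\mathrm{naive}}_{(n-1,1)}$ that forgets the Kr\"amer subsheaf but retains the Kottwitz determinant condition
$
\det\bigl(T - i(\alpha)\mid \Lie(X)\bigr) = (T-\varphi(\alpha))^{n-1}(T-\overline{\varphi}(\alpha))
$
for all $\alpha\in\co_\kk$, which is implied by the existence of a filtration $F_X$ satisfying Kr\"amer's signature condition. This is a unitary Rapoport--Zink problem in the sense of \cite{rapoport-zink}, so $M^{\mathrm{naive}}_{(n-1,1)}$ is representable by a formal $\breve{\co}_\kk$-scheme locally formally of finite type. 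The extra datum of a rank $(n-1)$ local direct summand $F_X\subset\Lie(X)$ stable under $\co_\kk$ and satisfying the two character conditions is a closed condition inside a relative Grassmannian over $M^{\mathrm{naive}}_{(n-1,1)}$, so $M_{(n-1,1)}\to M^{\mathrm{naive}}_{(n-1,1)}$ is representable and projective, and $M_{(n-1,1)}$ inherits the property of being locally formally of finite type over $\breve{\co}_\kk$.

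\smallskip

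\noindent \textbf{Reduction to the local model.} Next I would invoke the local model diagram
\[
M_{(n-1,1)} \xleftarrow{\pi} \widetilde{M}_{(n-1,1)} \xrightarrow{\tau} M^{\mathrm{loc}},
\]
where $\widetilde{M}_{(n-1,1)}$ classifies objects of $M_{(n-1,1)}$ together with a trivialization of the Hodge--de Rham realization of the universal $p$-divisible group, $\pi$ is a torsor under a smooth group scheme, $\tau$ is smooth, and $M^{\mathrm{loc}}$ is the projective $\breve{\co}_\kk$-scheme classifying an $\co_\kk$-stable Lagrangian $\mathcal{F}$ in a fixed $\co_\kk$-module of rank $2n$ together with a direct summand $\mathcal{F}'\subset\mathcal{F}$ of corank one, subject to Kr\"amer's signature conditions. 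The standard local-model argument of \cite{rapoport-zink} then shows that $M_{(n-1,1)}$ and $M^{\mathrm{loc}}$ are \'etale-locally isomorphic, so in particular one is flat, regular, or formally smooth over $\breve{\co}_\kk$ if and only if the other is.

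\smallskip

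\noindent \textbf{Geometry of $M^{\mathrm{loc}}$.} If $\kk/\Q_p$ is unramified, then $\co_\kk\otimes_{\Z_p}\breve{\co}_\kk$ decomposes as $\breve{\co}_\kk\times\breve{\co}_\kk$ along $(\varphi,\overline{\varphi})$, and this splits $\Lie(X)$ into eigensummands of ranks $n-1$ and $1$; the subspace $F_X$ is then forced to coincide with the $\varphi$-eigensummand. The local model reduces to a classical Grassmannian, which is smooth, yielding claim (2), and hence \emph{a fortiori} claim (1). If $\kk/\Q_p$ is ramified and $p>2$, the character conditions on $F_X$ and $\Lie(X)/F_X$ impose genuinely new constraints, and I would appeal to Kr\"amer \cite{kramer}, who describes $M^{\mathrm{loc}}$ as an explicit blow-up of Pappas' naive local model \cite{pappas} along its singular locus, and verifies by a direct calculation of local equations that this blow-up is flat over $\breve{\co}_\kk$ and regular of dimension $n$. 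The main obstacle is precisely this ramified case: the naive moduli problem is not flat, so Kr\"amer's explicit resolution cannot be avoided, and the hypothesis $p>2$ enters her local computation in an essential way.
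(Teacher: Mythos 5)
Your proposal follows essentially the same route as the paper: representability from the general Rapoport--Zink formalism, then flatness, regularity, and (in the unramified case) smoothness via the local model diagram, with the unramified local model being a Grassmannian and the ramified one handled by Kr\"amer's resolution of Pappas' naive local model. The paper's own proof is just a terse pointer to \cite{rapoport-zink}, \cite{pappas}, and \cite{kramer}, so your elaboration (naive functor with the Kottwitz condition, the Kr\"amer filtration as a closed condition in a relative Grassmannian, the local model diagram) is a reasonable filling-in of the same argument.

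The one point you do not address is $p=2$. Hypothesis~\ref{hyp:unramified} permits $p=2$ when $\kk/\Q_p$ is unramified, but the representability and local-model results of \cite{rapoport-zink} that both you and the paper invoke are stated under a blanket assumption $p\neq 2$. Your proof therefore silently covers only $p>2$; your phrasing even suggests the ramified case is ``the main obstacle,'' whereas the unramified $p=2$ case is the one that falls outside the published literature. The paper acknowledges this explicitly and can only patch it by citing the forthcoming appendix to \cite{RSZ}. You should either restrict to $p>2$ and flag the remaining case, or cite that forthcoming work as the paper does.
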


\begin{proof}
First suppose that $p>2$.
The representability follows from the general results of Rapoport-Zink   \cite[Theorem 3.25]{rapoport-zink}.
The remaining claims can be verified using the theory of local models, as  in \cite{pappas} and \cite[Proposition 3.33]{rapoport-zink}.
In the unramified case the analysis of the local model is routine, and in the ramified case it was done by Kr\"amer \cite{kramer}.

The $p=2$ case is excluded from much of \cite{rapoport-zink} by the blanket assumption imposed in \cite[p.~75]{rapoport-zink}, and the author  is unaware of a published or publicly available reference for this case\footnote{When $p=2$ there is a thorough study of unitary Rapoport-Zink spaces of  signature $(1,1)$  in the work of Kirch \cite{kirch}, even when $\kk/\Q_p$ is ramified.
}.
However, M.~Rapoport has informed the author that  the necessary extensions to  $p=2$ with $\kk/\Q_p$ unramified  will appear in an appendix to the forthcoming work \cite{RSZ}.  
\end{proof}

Following \cite{KR-unitary-1},  we will define a family of divisors on 
\begin{equation*}
M = M_{(1,0)} \times_{\Spf( \breve{\co}_\kk) } M_{(n-1,1)}.
\end{equation*}
If $S\in \mathrm{Nilp}$, we will write $S$-points of $M$ simply as
$
(X_0,X) \in M(S),
$
rather than the cumbersome $(X_0,i_0,\lambda_0,\varrho_0,X,i,\lambda, F_X, \varrho)$.

\begin{lemma}\label{lem:hermitian inclusion}
The $\kk$-vector space
\[
V = \Hom_{\co_\kk}(\bm{X}_0 , \bm{X}) [1/p]
\]
has dimension $n$. 
For any  $S\in \mathrm{Nilp}$ and  any  $(X_0,X) \in M(S)$  there is a canonical inclusion\footnote{Here one must interpret the right hand side as  global sections of the Zariski sheaf
$\underline{\Hom}( X_0, X ) [1/p]$ on $S$, as in \cite[Definition 2.8]{rapoport-zink}.  If $S$ is quasi-compact this agrees with the naive definition.  We will ignore this technical point in all that follows.}
\begin{equation}\label{hermitian inclusion}
V\subset  \Hom_{\co_\kk}( X_0 , X ) [1/p].
\end{equation}
\end{lemma}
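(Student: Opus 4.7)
The plan is to handle the two claims separately, with the canonical inclusion being the main content.

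For the dimension assertion, I would pass to rational Dieudonn\'e modules over the algebraically closed residue field $\breve{\co}_\kk/\breve{\mathfrak{m}}$. Since $\bm{X}_0$ and $\bm{X}$ are supersingular, both rational Dieudonn\'e modules are isoclinic of slope $1/2$, and $\End(\bm{X}_0)[1/p]$ is the quaternion division algebra $D/\Q_p$, into which $\bm{i}_0$ embeds $\kk$ as a maximal subfield. A direct linear algebra calculation on the isocrystals, using the signature conditions $(1,0)$ and $(n-1,1)$ to pin down the $\co_\kk \otimes_{\Z_p} \breve{\co}_\kk$-module structure, then yields $\dim_\kk V = n$. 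This is carried out explicitly in \cite{KR-unitary-1} in the unramified case; the ramified case is entirely analogous using Kr\"amer's variant of the signature condition.

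For the canonical inclusion (\ref{hermitian inclusion}), the key input is Drinfeld's rigidity of quasi-isogenies: since $p$ is locally nilpotent on $S$, pullback along $\overline{S} \hookrightarrow S$ induces a bijection between quasi-homomorphism groups over $S$ and over $\overline{S}$ for any pair of $p$-divisible groups. Given $x \in V$, viewed as a quasi-isogeny $\bm{X}_0 \to \bm{X}$ over $\Spec(\breve{\co}_\kk/\breve{\mathfrak{m}})$, I would form the composition $\varrho^{-1} \circ x_{\overline{S}} \circ \varrho_0 \colon X_0 \times_S \overline{S} \to X \times_S \overline{S}$ and apply rigidity to lift it uniquely to some $\tilde{x} \in \Hom(X_0, X)[1/p]$. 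The $\co_\kk$-linearity of $\tilde{x}$ then follows from uniqueness: for $\alpha \in \co_\kk$, the two maps $\tilde{x} \circ i_0(\alpha)$ and $i(\alpha) \circ \tilde{x}$ have identical reductions to $\overline{S}$, because $x$, $\varrho_0$, and $\varrho$ are all $\co_\kk$-linear, and hence agree on $S$. Injectivity of $x \mapsto \tilde{x}$ follows from the same rigidity principle together with the invertibility of $\varrho_0$ and $\varrho$ as quasi-isogenies.

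The main obstacle is essentially nonexistent: both parts reduce to standard inputs, a classical rational Dieudonn\'e computation for the dimension and Drinfeld's foundational rigidity theorem for the inclusion. The only real technical bookkeeping is the footnote's distinction between $\Hom$ as a naive $\Q_p$-vector space and as the global sections of the Zariski sheaf $\underline{\Hom}(X_0, X)[1/p]$; this is handled by noting that the lift $\tilde{x}$ is constructed Zariski-locally on $S$ and glues by the uniqueness statement in Drinfeld rigidity, so the construction makes sense regardless of whether $S$ is quasi-compact.
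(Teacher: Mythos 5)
Your proof of the inclusion is essentially identical to the paper's: transport $x$ along $\varrho_0$ and $\varrho^{-1}$ to a quasi-homomorphism over $\overline{S}$ and invoke rigidity of quasi-isogenies (the paper cites \cite[Lemma 1.1.3]{katz} for exactly this), with the $\co_\kk$-linearity absorbed into the statement that reduction is a bijection on $\co_\kk$-linear quasi-homomorphisms. For the dimension count the paper argues slightly more cleanly---using supersingularity and Noether--Skolem to choose an $\co_\kk$-linear quasi-isogeny $\bm{X}\to\bm{X}_0^n$, whence $V\iso\End_{\co_\kk}(\bm{X}_0)[1/p]^n$ with each factor the centralizer of $\kk$ in the quaternion algebra---but your isocrystal computation rests on the same facts and is correct (note only that the signature conditions are not actually needed there, since they constrain Lie algebras rather than the rational Dieudonn\'e modules).
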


\begin{proof}
As $\bm{X}$ is supersingular, there is a quasi-isogeny of $p$-divisible groups
\[
\bm{X} \to \bm{X}_0 \times \cdots \times \bm{X}_0.
\]
The Noether-Skolem theorem implies that any two embeddings of $\kk$ into 
\[
\End(\bm{X}) [1/p] \iso M_n( \End(\bm{X}_0)) [1/p] 
\]
are conjugate, and hence this quasi-isogeny can be chosen to be $\co_\kk$-linear.
It follows that
\[
V\iso \End_{\co_\kk}(\bm{X}_0)[1/p] \times \cdots \times \End_{\co_\kk}(\bm{X}_0)[1/p] .
\]
 Each factor on the right has dimension one, proving the first claim of the lemma.

Given $x\in V$, the quasi-isogenies $\varrho_0$ and $\varrho$ allow us to identify $x$ with 
\[
  \varrho^{-1}\circ x \circ  \varrho_0 \in  \Hom_{\co_\kk}(X_0 \times_S \overline{S}, X \times_S \overline{S} ) [1/p] .
\]
The reduction map 
\[
\Hom_{\co_\kk}( X_0 , X ) [1/p] \to   \Hom_{\co_\kk}(X_0 \times_S \overline{S}, X \times_S \overline{S} ) [1/p] 
\]
is an isomorphism by \cite[Lemma 1.1.3]{katz}, proving the second claim of the lemma.
\end{proof}

The second claim of Lemma \ref{lem:hermitian inclusion} allows us to make the following definition.

 \begin{definition}\label{def:KR divisor}
 For any nonzero $x\in V$ we define the  \emph{Kudla-Rapoport divisor} to be the closed formal subscheme  
 \[
 Z(x)\subset M
 \]
 whose functor of points assigns to any $S\in \mathrm{Nilp}$  the set of  all $(X_0,X) \in M(S)$ for which
$x\in  \Hom_{\co_\kk}( X_0 , X )$ under the inclusion (\ref{hermitian inclusion}).
  \end{definition}

 When $\kk/\Q_p$ is unramified, it is proved in \cite{KR-unitary-1} that $Z(x) \subset M$ is defined locally by a single equation.  A proof of the same claim  in the ramified case can be found in \cite{Ho-KR-2}.  
We will reprove these results below in Proposition \ref{prop:cartier},  as the arguments provide additional information that will be  essential for the proof of Theorem \ref{thm:linear invariance}.


\section{Vector bundles}
\label{s:vector bundles}


For the remainder of the paper $(X_0, X)$  denotes the universal object over 
\[
M = M_{(1,0)} \times_{\Spf( \breve{\co}_\kk) } M_{(n-1,1)}.
\]
Let  $D(X)$ be  the restriction to the Zariski site of the covariant Grothendieck-Messing crystal of  $X$.  
Thus $D(X)$ is a vector bundle  on $M$ of rank $2n$, sitting in a short exact sequence
\[
0 \to \Fil (X) \to D(X) \to \Lie(X) \to 0.
\]
Similarly, the Grothendieck-Messing crystal of $X_0$ determines  a  short exact sequence
\[
0 \to \Fil (X_0) \to D(X_0) \to \Lie(X_0) \to 0,
\]
of vector bundles on $M$.   

 The actions $i_0 :\co_\kk\to \End(X_0)$ and $i : \co_\kk \to \End(X)$ induce actions of $\co_\kk$ on all of these vector bundles, and the above short exact sequences are $\co_\kk$-linear.
The principal polarization on $X$ induces a perfect alternating pairing
\[
\langle\cdot,\cdot\rangle : D(X)\times D(X) \to \co_M,
\]
which is compatible with the action $i:\co_\kk \to \End_{\co_M}( D(X))$,  in the sense that
\begin{equation}\label{hermitian}
\langle i(\alpha)  x,y \rangle = \langle x, i(\overline{\alpha}) y \rangle
\end{equation}
for all $\alpha\in \co_\kk$ and all local sections $x$ and $y$ of $D(X)$.  The local direct summand $\Fil(X) \subset D(X)$ is maximal isotropic with respect to this pairing, and hence there is an induced perfect pairing
\begin{equation}\label{isotropy pairing}
\langle\cdot,\cdot\rangle : \Fil(X) \times \Lie(X) \to \co_M.
\end{equation}

By virtue of the moduli problem defining $M_{(n-1,1)}$, there is a distinguished local direct summand 
$
F_X \subset \Lie(X)
$
 of rank $n-1$, whose annihilator with respect to the pairing (\ref{isotropy pairing}) is a  local direct summand
$
F_X^\perp \subset \Fil(X)
$
of rank one.
Both submodules are stable under the  action of $\co_\kk$, which acts
\begin{itemize}
\item
on  $F_X$ and $F_X^\perp$ via $\varphi :\co_\kk \to \breve{\co}_\kk$,
\item
on $\Lie(X)/F_X$ and  $\Fil(X)/F^\perp_X$  via  $\overline{\varphi}:\co_\kk \to \breve{\co}_\kk$.
\end{itemize}

There is a natural morphism of $\co_M$-algebras
\[
\co_\kk \otimes_{\Z_p} \co_M \map{   \alpha\otimes 1\mapsto ( \varphi(\alpha) , \overline{\varphi}(\alpha))   } 
 \co_M \times \co_M.
\]
If  $\kk/\Q_p$ is unramified this map is an isomorphism, and we obtain a pair of orthogonal idempotents in $\co_\kk \otimes_{\Z_p} \co_M$.  
Without any assumption on ramification, one can still define reasonable substitutes for these idempotents.  To do so, fix a $\beta \in \co_\kk$ satisfying $\co_\kk = \Z_p + \Z_p\beta$, and define
\begin{align*}
\epsilon   & = \beta \otimes 1 - 1 \otimes \overline{\varphi} (\beta) \in  \co_\kk \otimes_{\Z_p} \co_M \\
\overline{\epsilon}   & =  \overline{\beta} \otimes 1 - 1\otimes \overline{\varphi}( \beta ) \in  \co_\kk \otimes_{\Z_p} \co_M .
\end{align*}
The ideal sheaves in $\co_\kk\otimes_{\Z_p}\co_M$ generated by these elements are independent of the choice of $\beta$, and there are short exact sequences of $\co_M$-modules
\begin{align*}
0 \to (\epsilon) \to  \co_\kk \otimes_{\Z_p}  \co_M \map{\alpha \otimes 1 \mapsto \overline{\varphi}(\alpha) }  \co_M \to 0 \\
0 \to ( \overline{\epsilon} ) \to  \co_\kk \otimes_{\Z_p} \co_M \map{\alpha \otimes 1 \mapsto \varphi(\alpha) }  \co_M \to 0 .
\end{align*}

\begin{remark}\label{rem:direct summand}
In particular, $(\epsilon)$ and $(\overline{\epsilon})$  are  rank one  $\co_M$-module  local direct summands of $\co_\kk\otimes_{\Z_p}\co_M$.
\end{remark}

Let $\mathfrak{d} \subset \co_\kk$ be the different of $\kk/\Q_p$, and set $\breve{\mathfrak{d}}= \varphi(\mathfrak{d}) \breve{\co}_\kk.$  It follows from  Hypothesis \ref{hyp:unramified} that
\begin{equation*}
\breve{\mathfrak{d}}  = \begin{cases}
 \breve{\co}_\kk  &\mbox{if }  \kk/\Q_p \mbox{ is unramified} \\
  \breve{\mathfrak{m}}  &\mbox{if }  \kk/\Q_p \mbox{ is ramified.} 
  \end{cases}
\end{equation*}

\begin{lemma}\label{lem:idempotents}
 Suppose $N$ is an $\co_M$-module endowed with an action 
 \[
 i  : \co_\kk \to \End_{ \co_M  }  (N) .
 \]
   If we view $N$ as an $\co_\kk \otimes_{\Z_p}\co_M$-module, then  $N/ \overline{\epsilon} N$ and $N/\epsilon N$  are the maximal quotients of $N$ on which $\co_\kk$ acts through $\varphi$ and $\overline{\varphi}$, respectively.
Moreover, 
\begin{align*}
\epsilon N & \subset  \{ n\in N  : \forall \alpha\in \co_\kk, \  i  (\alpha)  x  = \varphi(\alpha)x  \} \\
 \overline{\epsilon} N  & \subset\{ n \in N : \forall \alpha\in \co_\kk, \  i  (\alpha) x = \overline{\varphi}(\alpha)x  \} ,
\end{align*}
and both quotients are annihilated by $\breve{\mathfrak{d}} \co_M$.
 \end{lemma}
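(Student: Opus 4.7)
The plan is to reduce everything to a single Cayley--Hamilton-type identity in $\co_\kk \otimes_{\Z_p} \co_M$. Since $\beta + \overline{\beta}$ and $\beta \overline{\beta}$ both lie in $\Z_p$, the element $\beta \otimes 1$ is annihilated by the polynomial $x^2 - (\beta + \overline{\beta}) x + \beta \overline{\beta}$, which factors over $\co_M$ as $(x - \varphi(\beta))(x - \overline{\varphi}(\beta))$. A direct expansion therefore yields the key identity
\[
(\beta \otimes 1 - 1 \otimes \varphi(\beta)) \cdot \epsilon = 0
\]
in $\co_\kk \otimes_{\Z_p} \co_M$, together with its conjugate $(\overline{\beta} \otimes 1 - 1 \otimes \overline{\varphi}(\overline{\beta})) \cdot \overline{\epsilon} = 0$. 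I would start the proof by recording these two identities.

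Next I would handle the universal property of $N / \overline{\epsilon} N$. Directly from the definition of $\overline{\epsilon}$, on this quotient one has $i(\overline{\beta}) = \overline{\varphi}(\beta) = \varphi(\overline{\beta})$. Because $\overline{\beta} = (\beta + \overline{\beta}) - \beta$ gives $\co_\kk = \Z_p + \Z_p \overline{\beta}$, the actions of $i$ and $\varphi$ agree on all of $\co_\kk$. Conversely, any quotient $N \twoheadrightarrow Q$ on which $\co_\kk$ acts through $\varphi$ must kill every element of the form $\overline{\epsilon} n = (i(\overline{\beta}) - \varphi(\overline{\beta})) n$, so the projection factors through $N / \overline{\epsilon} N$. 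The analogous statement for $N / \epsilon N$ is the mirror image, using $\beta$ and $\overline{\varphi}$ in place of $\overline{\beta}$ and $\varphi$.

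The two displayed inclusions then drop out of the key identities: applying the first to any $m \in N$ gives $(i(\beta) - \varphi(\beta)) \epsilon m = 0$, so $i(\beta)$ and $\varphi(\beta)$ agree on all of $\epsilon N$; combined with their automatic agreement on $\Z_p$, this forces $\co_\kk$ to act on $\epsilon N$ through $\varphi$. The inclusion for $\overline{\epsilon} N$ is the conjugate. Finally, for the claim that the cokernels of these inclusions are annihilated by $\breve{\mathfrak{d}} \co_M$, I would set $\delta = \varphi(\beta) - \overline{\varphi}(\beta) \in \breve{\co}_\kk$ and observe that whenever $i(\alpha) n = \varphi(\alpha) n$ for all $\alpha \in \co_\kk$, a one-line computation gives $\epsilon n = (\varphi(\beta) - \overline{\varphi}(\beta)) n = \delta n$. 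Hence $\delta$ annihilates the cokernel of the first inclusion, and the parallel computation handles the second. The identification $\breve{\mathfrak{d}} = (\delta) \breve{\co}_\kk$ follows from the standard formula $\mathfrak{d} = (f'(\beta)) = (\beta - \overline{\beta})$ for the different of $\co_\kk = \Z_p[\beta]$.

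There is no substantive obstacle: every step is a routine calculation in the rank-two $\Z_p$-algebra $\co_\kk$. The only thing requiring care is the bookkeeping of when $\varphi$ versus $\overline{\varphi}$ is the relevant character, together with the relation $\overline{\varphi}(\beta) = \varphi(\overline{\beta})$ that mediates between them.
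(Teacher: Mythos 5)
Your proof is correct and complete. Note that the paper itself offers no argument here --- the lemma is dismissed as ``an elementary exercise, left to the reader'' --- so there is no authorial proof to compare against; your write-up is a valid filling-in of that exercise. The Cayley--Hamilton identity $(\beta\otimes 1 - 1\otimes\varphi(\beta))\cdot\epsilon=0$, obtained by factoring the minimal polynomial of $\beta$ over $\co_M$, is exactly the right organizing device: it simultaneously yields the two displayed inclusions, and its ``first-order'' consequence $\epsilon n=(\varphi(\beta)-\overline{\varphi}(\beta))n$ on the $\varphi$-eigenspace gives the annihilation of the cokernels by $\breve{\mathfrak{d}}\co_M$ via the standard formula $\mathfrak{d}=(f'(\beta))=(\beta-\overline{\beta})$ for the monogenic order $\co_\kk=\Z_p[\beta]$. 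The universal-property argument for $N/\overline{\epsilon}N$ and $N/\epsilon N$ is likewise complete, since $\Z_p$-linearity reduces everything to the action of the single generator $\beta$ (respectively $\overline{\beta}$).
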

 
 \begin{proof}
This is an elementary exercise, left to the reader.
\end{proof}
 
\begin{proposition}\label{prop:new line}
There are inclusions of $\co_M$-module local direct summands
$
F_X^\perp \subset \epsilon D(X) \subset D(X).
$
The morphism $\epsilon : D(X) \to \epsilon D(X)$  descends to a surjection
\begin{equation}\label{big quotient}
\Lie(X) \map{\epsilon} \epsilon D(X) / F_X^\perp,
\end{equation}
whose kernel 
$
L_X \subset \Lie(X)
$
 is an $\co_M$-module local direct summand of rank one.  
It is stable under $\co_\kk$, which acts on $\Lie(X)/L_X$   and  $L_X$  via $\varphi, \overline{\varphi} :\co_\kk \to \breve{\co}_\kk$, respectively.
\end{proposition}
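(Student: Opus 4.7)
The strategy is to identify $\epsilon D(X)$ with the maximal $\co_M$-submodule $D(X)^{\varphi} \subset D(X)$ on which $\co_\kk$ acts through $\varphi$, and then to realize the surjection $\Lie(X) \twoheadrightarrow \epsilon D(X)/F_X^\perp$ as induced from multiplication by $\epsilon$. The proof proceeds in two stages.

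For the first stage, the identification $\epsilon D(X) = D(X)^{\varphi}$ rests on three ingredients. First, a direct computation using that $\beta$ satisfies its quadratic minimal polynomial over $\Z_p$ gives $\epsilon\overline{\epsilon} = 0$ as operators on $D(X)$, so $\epsilon D(X) \subset \ker(\overline{\epsilon}) = D(X)^{\varphi}$. Second, sesquilinearity (\ref{hermitian}) yields $\langle \epsilon x, y\rangle = \langle x, \overline{\epsilon} y\rangle$, and perfection of the pairing gives $(\epsilon D(X))^\perp = \ker(\overline{\epsilon}) = D(X)^{\varphi}$, so $\epsilon D(X)$ and $D(X)^{\varphi}$ are each other's annihilators. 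Third, a fiberwise dimension count shows both have rank $n$ at every point of $M$, forcing equality and the local-direct-summand property. Since $\co_\kk$ acts on $F_X^\perp$ through $\varphi$ (stated just before the proposition), we immediately obtain $F_X^\perp \subset D(X)^{\varphi} = \epsilon D(X)$.

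For the second stage, because $\Fil(X)/F_X^\perp$ carries the $\overline{\varphi}$-action, Lemma \ref{lem:idempotents} yields $\epsilon \Fil(X) \subset F_X^\perp$, so the composition $D(X) \xrightarrow{\epsilon} \epsilon D(X) \twoheadrightarrow \epsilon D(X)/F_X^\perp$ descends to the claimed surjection $\Lie(X) \twoheadrightarrow \epsilon D(X)/F_X^\perp$ of locally free $\co_M$-modules of ranks $n$ and $n-1$. The kernel $L_X$ is therefore locally free of rank one and a local direct summand. By Lemma \ref{lem:idempotents}, $\co_\kk$ acts on $\Lie(X)/L_X \iso \epsilon D(X)/F_X^\perp$ through $\varphi$, since the latter is a quotient of $\epsilon D(X) \subset D(X)^{\varphi}$. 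To see that $\co_\kk$ acts on $L_X$ through $\overline{\varphi}$, the plan is to show the composition $L_X \hookrightarrow \Lie(X) \twoheadrightarrow \Lie(X)/F_X$ is an isomorphism by proving $L_X \cap F_X = 0$: for $v \in L_X \cap F_X$ with lift $\tilde v \in D(X)$, the $\varphi$-action on $F_X$ yields $\epsilon \tilde v \equiv \delta \tilde v \pmod{\Fil(X)}$ with $\delta = \varphi(\beta) - \overline{\varphi}(\beta)$, while $\epsilon \tilde v \in F_X^\perp \subset \Fil(X)$ by definition of $L_X$, so $\delta v = 0$ in $\Lie(X)$. Since $\delta$ generates $\breve{\mathfrak{d}}$ and is in particular a non-zero-divisor in $\co_M$ (by flatness of $M$ over $\breve{\co}_\kk$), and $\Lie(X)$ is locally free, we conclude $v = 0$. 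The resulting $\co_\kk$-equivariant isomorphism of line bundles $L_X \iso \Lie(X)/F_X$ transports the $\overline{\varphi}$-action.

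The principal obstacle is the rank computation $\mathrm{rk}(\epsilon D(X)) = n$. The unramified case is immediate, since $\co_\kk \otimes_{\Z_p} \co_M \iso \co_M \times \co_M$ and $\epsilon$ becomes, up to a unit, the idempotent onto the $\varphi$-factor. In the ramified case, $\epsilon$ and $\overline{\epsilon}$ are no longer orthogonal idempotents but only satisfy $\epsilon\overline{\epsilon} = 0$, and the rank claim ultimately rests on $D(X)$ being locally free of rank $n$ over $\co_\kk \otimes_{\Z_p} \co_M$; this is where the Kr\"amer signature condition on $F_X$ is genuinely essential, via the local-model analysis invoked in the representability of $M_{(n-1,1)}$.
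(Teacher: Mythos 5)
Your argument is correct in substance and shares the paper's overall architecture (descend multiplication by $\epsilon$ through the Hodge filtration, identify the kernel as a rank-one local direct summand, read off the $\co_\kk$-actions), but two of your sub-arguments genuinely differ from the paper's. For the inclusion $F_X^\perp \subset \epsilon D(X)$, the paper first proves only the weaker statement $\breve{\mathfrak{d}}\, F_X^\perp \subset \epsilon D(X)$ via Lemma \ref{lem:idempotents}, and then upgrades it using the fact that $D(X)/\epsilon D(X)$ is $\co_M$-torsion free; your identification $\epsilon D(X) = \ker(\overline{\epsilon}) = (\epsilon D(X))^\perp$, obtained from $\epsilon\overline{\epsilon}=0$, sesquilinearity, and the fact that two nested local direct summands of equal rank coincide, yields the inclusion in one step and avoids the torsion-freeness detour. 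For the final claim, the paper shows the map $L_X \to \Lie(X)/F_X$ is nonzero by contradiction (if it vanished, $\overline{\epsilon}\cdot\overline{\epsilon}$ would annihilate $\Lie(X)$, whereas $\overline{\epsilon}$ acts on $\Lie(X)/F_X$ by the non-zero-divisor $\varphi(\beta-\overline{\beta})$) and then invokes local integrality of $M$ to pass from nonzero to injective; your direct computation showing $\delta v = 0$ for any local section $v$ of $L_X \cap F_X$ reaches injectivity without that last step. Both routes use flatness of $M$ over $\breve{\co}_\kk$ at exactly the same point, to know that $\varphi(\beta-\overline{\beta})$ is a non-zero-divisor on $\co_M$.

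Two corrections. First, the injection $L_X \hookrightarrow \Lie(X)/F_X$ is \emph{not} an isomorphism when $\kk/\Q_p$ is ramified; this is precisely the content of the remark following Definition \ref{def:modular forms}, where neither inclusion in $\breve{\mathfrak{d}}\cdot\omega \subset \omega_\old \subset \omega$ is an equality. Your conclusion survives because an injective $\co_\kk$-equivariant map into a sheaf on which $\co_\kk$ acts through $\overline{\varphi}$ already forces the same action on the source, but you should not assert the isomorphism. Second, you use without proof that $\epsilon D(X)/F_X^\perp$ is locally free of rank $n-1$, equivalently that $F_X^\perp$ is a local direct summand; this requires the (routine, but not free) observation that the perfect pairing (\ref{isotropy pairing}) identifies $\Fil(X)/F_X^\perp$ with the dual of $F_X$, which is locally free because $F_X$ is a local direct summand of $\Lie(X)$. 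This is exactly how the paper establishes that point.
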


 \begin{proof}
The vector bundle $D(X)$ is locally free of rank $n$ over $\co_\kk\otimes_{\Z_p}\co_M$, and hence 
$\epsilon D(X) \subset D(X)$ is a local $\co_M$-module direct summand by Remark \ref{rem:direct summand}.
As $F_X$ is locally free over $\co_M$, the perfect pairing
\[
\big( \Fil(X) / F_X^\perp\big) \otimes F_X  \to   \co_M,
\]  
induced by (\ref{isotropy pairing}) shows that  $\Fil(X)/ F_X^\perp$ is locally free, from which it follows that $F_X^\perp$ a local direct summand of  $D(X)$.  

Now consider the perfect pairing 
\[
F_X^\perp \otimes (\Lie(X) / F_X) \to \co_M
\]
induced by (\ref{isotropy pairing}).  As $\co_\kk$ acts on  $\Lie(X)/F_X$ via $\overline{\varphi}$, the relation (\ref{hermitian})  implies that $\co_\kk$ acts on $F_X^\perp$ via $\varphi$.  
Lemma \ref{lem:idempotents} thus implies 
\[
\breve{\mathfrak{d}} F_X^\perp \subset \epsilon   F_X^\perp \subset F_X^\perp,
\]
and so $\breve{\mathfrak{d}} F_X^\perp \subset \epsilon D(X)$.  
The stronger inclusion $F_X^\perp \subset \epsilon D(X)$ then follows from the fact that $D(X)/\epsilon D(X)$ is $\co_M$-torsion free.

As $\co_\kk$ acts on $F_X$ through $\varphi:\co_\kk \to \breve{\co}_\kk$, we must have $\overline{\epsilon} F_X =0$.  
Hence 
\[
\langle \epsilon x,y \rangle = \langle x, \overline{\epsilon} y\rangle =0
\]
for all local sections  $x$ and $y$ of $\Fil(X)$ and $F_X$, respectively.  Thus 
\[
\epsilon \Fil(X) \subset F_X^\perp ,
\]
and the map  (\ref{big quotient}) is well-defined.

The kernel $L_X$ of  (\ref{big quotient})  is  a local direct summand, as  (\ref{big quotient}) is a surjection to a locally free $\co_M$-module.
Moreover,  Lemma \ref{lem:idempotents} implies that $\co_\kk$ acts on the codomain  via $\varphi$, and hence acts on $\Lie(X)/L_X$ in the same way.
 
Suppose the natural map
$
L_X \to \Lie(X) / F_X
$ 
is trivial.  The inclusion $L_X \subset F_X$ then shows that  $\co_\kk$ acts on both $L_X$ and $\Lie(X)/L_X$ via $\varphi$, and hence both are annihilated by $\overline{\epsilon}$.  This means that $\overline{\epsilon}\cdot \overline{\epsilon}$ annihilates $\Lie(X)$.
But $\overline{\epsilon}$ acts on $\Lie(X)/F_X$ via the nonzero scalar $\varphi(\beta-\overline{\beta}) \in \breve{\co}_\kk$,  a contradiction.

The map  $L_X \to \Lie(X) / F_X$  is therefore nonzero, and hence injective as $M$ is locally integral.
As $\co_\kk$ acts on the codomain via $\overline{\varphi}$, it acts in the same way on $L_X$.
\end{proof}

The line bundle $L_X$ of Proposition \ref{prop:new line} is,  by construction, 
the pullback  of a line bundle on $M_{(n-1,1)}$ via the projection $M\to M_{(n-1,1)}$.  
We will now twist it  by a line bundle pulled back via  $M\to M_{(1,0)}$.

\begin{definition}\label{def:modular forms}
The \emph{line bundle of  modular forms}  $\omega$ is the invertible sheaf of $\co_M$-modules with inverse
\[
\omega^{-1} =  \underline{\Hom} ( \Fil (X_0) ,  L_X  ).
\]
\end{definition}

 \begin{remark}
The line bundle of Definition \ref{def:modular forms}  does not agree with the line bundle of modular forms defined in  \cite{BHKRY-1,BHKRY-2}.  
In those papers the line bundle of modular forms, which we here denote by $\omega_\old$,  is characterized by
\[
\omega_\old^{-1} =  \underline{\Hom} ( \Fil (X_0) , \Lie( X) / F_{X}  ) .
\]
The inclusion $L_X \subset \Lie(X)$ induces a  morphism $L_X \to  \Lie(X) / F_X$,
 which in turn induces $\omega_\old \to \omega$. 
It is not difficult to check that this latter map identifies
\[
\breve{\mathfrak{d}} \cdot \omega \subset \omega_\old \subset \omega,
\]
but when $\kk/\Q_p$ is ramified neither inclusion is an equality.  
\end{remark}


\section{Deformation theory}
\label{s:deformation}


Suppose $Z\subset M$ is any closed formal subscheme, and denote by  $I_Z\subset \co_M$ its ideal sheaf.
The square $I_{\widetilde{Z}}= I_Z^2$ is the ideal sheaf of  a larger closed formal subscheme
\[
Z\subset \widetilde{Z} \subset M
\]
called the \emph{first order infinitesimal neighborhood} of $Z$ in $M$.

Now fix a nonzero $x\in V$ and consider the first order infinitesimal neighborhood 
\[
Z(x) \subset \widetilde{Z}(x) \subset M
\]
of the corresponding Kudla-Rapoport divisor.
By the very definition (Definition \ref{def:KR divisor}) of $Z(x)$, when we restrict the universal object $(X_0,X)$ to $Z(x)$ we obtain  a distinguished morphism of $p$-divisible groups
\begin{equation*}
X_0|_{ Z(x) }   \map{x} X|_{ Z(x) }.
\end{equation*}
This induces an $\co_\kk$-linear morphism of vector bundles 
\begin{equation}\label{parallel transport}
D(X_0)|_{Z(x)} \map{x} D(X)|_{ Z(x)}
\end{equation}
on $Z(x)$, which respects the Hodge filtrations. 
By Grothendieck-Messing theory this morphism admits a canonical extension 
\[
D(X_0)|_{\widetilde{Z}(x)} \map{\widetilde{x}} D(X)|_{\widetilde{Z}(x)}
\]
to the first order infinitesimal neighborhood,  which no longer respects the Hodge filtrations. 
 Instead, it determines a nontrivial morphism
\begin{equation}\label{true obstruction}
\Fil(X_0)|_{\widetilde{Z}(x)}   \map{\widetilde{x}}  \Lie(X)|_{\widetilde{Z}(x)}.
\end{equation}

\begin{proposition}
The morphism (\ref{true obstruction}) takes values in the rank one local direct summand
\[
 L_X|_{\widetilde{Z}(x) } \subset \Lie(X)|_{\widetilde{Z}(x)},
\]
and so can be viewed as a morphism of line bundles
\begin{equation}\label{obstruction}
\Fil(X_0)|_{\widetilde{Z}(x)}   \map{\widetilde{x}}  L_X|_{\widetilde{Z}(x)}.
\end{equation}
The Kudla-Rapoport divisor $Z(x)$ is the largest closed formal subscheme of $\widetilde{Z}(x)$ over which (\ref{obstruction})  is trivial.
\end{proposition}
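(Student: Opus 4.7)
The proof consists of two separate claims, and the key technical input for both is the formalism of the idempotents $\epsilon, \overline{\epsilon}$ from Section \ref{s:vector bundles} together with Grothendieck-Messing deformation theory applied to the square-zero thickening $Z(x) \hookrightarrow \widetilde{Z}(x)$.

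First I would prove that the image of (\ref{true obstruction}) lies in $L_X|_{\widetilde{Z}(x)}$ by showing that the composite
\[
\Fil(X_0)|_{\widetilde{Z}(x)} \map{\widetilde{x}} \Lie(X)|_{\widetilde{Z}(x)} \map{\epsilon} \epsilon D(X)|_{\widetilde{Z}(x)} / F_X^\perp|_{\widetilde{Z}(x)}
\]
is the zero map. Because $\widetilde{x}$ is the Grothendieck-Messing canonical extension of the $\co_\kk$-linear morphism $x$, it is automatically $\co_\kk$-linear, hence $\co_\kk \otimes_{\Z_p} \co_M$-linear. It is therefore enough to verify that $\co_\kk$ acts on $\Fil(X_0)$ through $\overline{\varphi}$, since then $\epsilon \cdot \Fil(X_0) = 0$, and hence $\epsilon \widetilde{x}(f) = \widetilde{x}(\epsilon f) = 0$ for every local section $f$ of $\Fil(X_0)|_{\widetilde{Z}(x)}$. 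The character is identified by a short pairing argument parallel to the one used in the proof of Proposition \ref{prop:new line}: the principal polarization $\bm{\lambda}_0$ induces a perfect pairing $\Fil(X_0) \otimes \Lie(X_0) \to \co_M$ which, combined with the hermitian relation (\ref{hermitian}) and the fact that $\co_\kk$ acts on $\Lie(X_0)$ via $\varphi$, forces the $\co_\kk$-action on $\Fil(X_0)$ to be via $\overline{\varphi}$.

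For the second claim, the plan is to invoke Grothendieck-Messing theory directly. The square-zero ideal $I_{Z(x)}/I_{Z(x)}^2$ cutting out $Z(x)$ inside $\widetilde{Z}(x)$ carries a (necessarily nilpotent) divided power structure, so for any intermediate closed formal subscheme $Z(x) \subset Z \subset \widetilde{Z}(x)$, Grothendieck-Messing identifies the locus on which the integral morphism $x : X_0|_{Z(x)} \to X|_{Z(x)}$ extends to a morphism of $p$-divisible groups over $Z$ with the locus on which the canonical extension $\widetilde{x}|_Z$ preserves Hodge filtrations, i.e. with the vanishing locus of (\ref{obstruction})$|_Z$. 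By Definition \ref{def:KR divisor}, $Z(x)$ is the largest closed formal subscheme of $M$, and hence of $\widetilde{Z}(x)$, on which such an integral extension exists, so $Z(x)$ is also the largest closed formal subscheme of $\widetilde{Z}(x)$ on which (\ref{obstruction}) vanishes.

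The main obstacles are bookkeeping rather than conceptual. One must verify that the Grothendieck-Messing extension of a quasi-isogeny is genuinely $\co_\kk$-linear, which is immediate from the uniqueness clause of the canonical extension applied to each $i_0(\alpha)$ and $i(\alpha)$; and one must confirm that the scheme-theoretic vanishing locus of the map (\ref{obstruction}) really coincides with the scheme-theoretic representability of the extensibility condition, rather than only with its underlying point set. This latter point is the standard representability content of Grothendieck-Messing for nilpotent PD thickenings and requires only that one works consistently with the Zariski-site crystal used throughout the paper.
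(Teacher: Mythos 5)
Your argument is correct and follows essentially the same route as the paper's: both parts rest on the $\co_\kk\otimes_{\Z_p}\co_M$-linearity of the Grothendieck--Messing extension together with the $\epsilon$, $\overline{\epsilon}$ formalism of \S\ref{s:vector bundles}, and the second claim is the same appeal to Grothendieck--Messing theory combined with the fact that $L_X\subset\Lie(X)$ is a local direct summand. The only (immaterial) difference is bookkeeping: the paper shows $\widetilde{x}(\Fil(X_0))\subset\overline{\epsilon}\cdot\Lie(X)\subset L_X$ via the identity $\overline{\epsilon}\cdot D(X_0)=\Fil(X_0)$, whereas you kill the composite with the defining surjection $\Lie(X)\map{\epsilon}\epsilon D(X)/F_X^\perp$ of Proposition \ref{prop:new line} using $\epsilon\cdot\Fil(X_0)=0$.
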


\begin{proof}
The vector bundle $D(X_0)$ is locally free of rank one over $\co_\kk\otimes_{\Z_p} \co_M$, and its quotient
\[
D(X_0) / \Fil(X_0) \iso \Lie(X_0)
\]
is annihilated by $\overline{\epsilon}$.  Hence $ \overline{\epsilon} \cdot D(X_0) \subset \Fil(X_0)$, 
and equality holds as both are rank one local $\co_M$-module direct summands of $D(X_0)$;
see Remark \ref{rem:direct summand}.

It follows  that  (\ref{true obstruction}) takes values in the subsheaf
\[
  \overline{\epsilon} \cdot  \Lie(X)|_{\widetilde{Z}(x)} \subset \Lie(X)|_{\widetilde{Z}(x)} .
\]
On the other hand, the final claim of Proposition \ref{prop:new line} implies that $\overline{\epsilon}$ annihilates  $\Lie(X) / L_X$, and  hence
\[
  \overline{\epsilon} \cdot  \Lie(X)|_{\widetilde{Z}(x)}  \subset L_X|_{\widetilde{Z}(x)}.
\]
This proves the first claim.

For the second claim, it follows from Grothendieck-Messing theory that $Z(x)$ is the largest closed formal subscheme of $\widetilde{Z}(x)$ along which (\ref{true obstruction}) vanishes.  
As $L_X \subset \Lie(X)$ is a local direct summand, this is equivalent to (\ref{obstruction}) vanishing.
\end{proof}

\begin{definition}\label{def:obstruction}
The section 
\[
\mathrm{obst}(x) \in H^0\big(  \widetilde{Z}(x) , \omega^{-1}|_{  \widetilde{Z}(x)  }  \big)
\]
determined by (\ref{obstruction}) is called the \emph{obstruction to deforming $x$}. 
As we have already explained,  $Z(x)$ is the largest closed formal subscheme of $\widetilde{Z}(x)$ over which $\mathrm{obst}(x)=0$.
\end{definition}

 \begin{proposition}\label{prop:cartier}
For any nonzero $x\in V$, the closed formal subscheme  $Z(x)\subset M$ is a  Cartier divisor; that is to say, it is defined locally by a single nonzero equation.
\end{proposition}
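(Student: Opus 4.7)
The plan is to work locally at a point $m \in M$ and use the obstruction section of Definition \ref{def:obstruction} to construct a local equation. If $m \notin Z(x)$ there is nothing to check, so assume $m \in Z(x)$ and write $I = I_{Z(x),m} \subset \co_{M,m}$; then $I$ lies in the maximal ideal $\mathfrak{m} \subset \co_{M,m}$, and the first-order neighborhood corresponds to $\co_{M,m}/I^2 = \co_{\widetilde{Z}(x),m}$.

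Choose an open neighborhood $U$ of $m$ over which the line bundle $\omega^{-1}$ is trivial, and fix a trivialization. Under this trivialization the obstruction $\mathrm{obst}(x)$ corresponds, at the stalk at $m$, to an element $\bar f \in \co_{M,m}/I^2$. The defining property recalled in Definition \ref{def:obstruction} — that $Z(x)$ is the largest closed formal subscheme of $\widetilde{Z}(x)$ along which $\mathrm{obst}(x)$ vanishes — translates into the scheme-theoretic statement that the ideal $I/I^2$ of $\co_{M,m}/I^2$ is principal, generated by $\bar f$. Lift $\bar f$ arbitrarily to $f \in I$. Since $I \subset \mathfrak{m}$ we have $I^2 \subset \mathfrak{m} I$, so $f$ generates $I$ modulo $\mathfrak{m} I$; Nakayama's lemma (the local ring is Noetherian and $I$ is finitely generated) then yields $I = (f) \cdot \co_{M,m}$. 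Shrinking $U$, the element $f$ extends to a section of $\co_U$ whose vanishing locus is $Z(x) \cap U$.

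It remains to verify that $f$ is a non-zero-divisor. Since $M$ is regular by the preceding proposition, the local ring $\co_{M,m}$ is an integral domain, so this reduces to showing $f \neq 0$, i.e.\ that $Z(x)$ does not contain a full neighborhood of $m$. This is the one step that is not purely formal in the obstruction calculus: it requires the fact that, for nonzero $x \in V$, the closed formal subscheme $Z(x)$ is a proper subscheme of $M$. I would derive this by combining the moduli interpretation of $Z(x)$ with the observation that the quasi-isogeny $x : \bm X_0 \to \bm X$ is, up to a hermitian scaling, not an honest $\co_\kk$-linear homomorphism of $p$-divisible groups on all of $M$ (which can in turn be seen by specializing to a well-chosen closed point and invoking the Dieudonn\'e-theoretic description of $\Hom_{\co_\kk}(X_0, X)$, or simply by citing the analogous assertions already proved in \cite{KR-unitary-1} in the unramified case and in \cite{Ho-KR-2} in the ramified case).

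Granting this last input, $f \in \co_{U,m}$ is a nonzero generator of $I_{Z(x)}$ near $m$. As $m$ was arbitrary, $Z(x) \subset M$ is locally principal and hence a Cartier divisor. The main obstacle in this outline is, as indicated, the non-vanishing statement $Z(x) \neq M$; everything else is a direct unwinding of the obstruction theory assembled in \S\ref{s:deformation} plus Nakayama's lemma.
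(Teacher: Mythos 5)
Your reduction to the non-vanishing statement is exactly the paper's: trivialize $\omega$ near the point, observe that $\mathrm{obst}(x)$ generates $I/I^2$, and apply Nakayama to conclude that $I$ is principal. The paper agrees that everything then hinges on showing $I\neq 0$, i.e.\ that $Z(x)$ cannot contain an open formal neighborhood $U$ of the point. But your proposed argument for this last step has a genuine gap. Specializing to a well-chosen closed point and using Dieudonn\'e theory can only show that $x$ fails to be an honest homomorphism at \emph{some} closed point; what you need is that the locus of closed points where $x$ \emph{is} an honest homomorphism never contains a nonempty open subset of $M$ --- and even that would not suffice, since $Z(x)\supset U$ is a statement about the full formal (infinitesimal) structure, not just about closed points. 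Worse, the closed-point version of the claim is actually false in general: the support of $Z(x)$ can contain entire irreducible components of the reduced special fiber. The paper's own Proposition \ref{prop:lots of exceptional} exhibits divisors $D\iso\mathbb{P}^{n-1}$ with $D\subset Z(p^kx)$, and in the unramified case the support of $Z(x)$ is a union of Bruhat--Tits strata which can be top-dimensional in $M_{\mathrm{red}}$. So there is no ``well-chosen closed point'' to specialize to in a neighborhood of an arbitrary $z\in Z(x)$, and the honest-homomorphism condition at closed points cannot detect the needed non-vanishing.

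The paper's proof closes this gap by passing to the rigid analytic generic fiber: if $Z(x)|_U=U$, then over $U^\rig$ the crystals trivialize ($D(X_0)^\rig\iso V_0\otimes\co_{M^\rig}$, $D(X)^\rig\iso V\otimes\co_{M^\rig}$ by \cite[Proposition 5.17]{rapoport-zink}), the morphism induced by $x$ becomes a constant inclusion $V_0\subset V$ respecting filtrations, and hence the Grothendieck--Messing period morphism $\pi:M^\rig\to N^\rig$ restricted to $U^\rig$ lands in the proper closed subvariety of flags containing the line $\overline{\epsilon}V_0$ --- contradicting the \'etaleness of $\pi$. This is the essential non-formal input, and some argument of this kind (seeing the generic fiber, where the homomorphism condition is genuinely a codimension-one condition) is unavoidable. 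Your fallback of citing \cite{KR-unitary-1} and \cite{Ho-KR-2} would make the statement true, but it abdicates precisely the step the paper is at pains to reprove, and it does not produce the local-equation-via-obstruction description that the proof of Theorem \ref{thm:linear invariance} later relies on.
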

 
\begin{proof}
Let $R$ be the local ring of $M$ at a point $z\in Z(x)$, and let $I\supset I^2$
 be the ideals of $R$ corresponding to  $Z(x) \subset \widetilde{Z}(x)$.
 After pulling back via $\Spf(R) \to M$, we may trivialize the line bundle $\omega$, and  the obstruction to deforming $x$ becomes an $R$-module generator 
\[
\mathrm{obst}(x) \in I / I^2 . 
\] 
It  follows from Nakayama's lemma that $I\subset R$ is a principal ideal, and it only remains to show that $I\neq 0$.

Suppose $I=0$.  This implies that we may find an open subset $U\subset M$ such that $Z(x)|_U = U$.
As in \cite[Chapter 5]{rapoport-zink}, $M$ has an associated rigid analytic space $M^\rig$ over $\breve{\kk}$, and $U \subset M$ determines an admissible open subset
\[
U^\rig \subset M^\rig.
\]

The vector bundles of \S \ref{s:vector bundles} determine  filtered vector bundles 
\begin{align*}
  \Fil (X_0)^\rig & \subset D(X_0)^\rig \\
    \Fil (X)^\rig & \subset D(X)^\rig
\end{align*}
on $M^\rig$.  By \cite[Proposition 5.17]{rapoport-zink} these admit $\co_\kk$-linear trivializations
\begin{align}
 D(X_0)^\rig  & \iso  V_0 \otimes_{\breve{\kk}} \co_{M^\rig}  \label{trivialization 1}\\
 \qquad D(X)^\rig  & \iso  V \otimes_{\breve{\kk}} \co_{M^\rig}, \label{trivialization 2}
\end{align}
where $V_0$ and $V$ are vector spaces over  $\breve{\kk}$ of dimensions $2$ and $2n$, respectively, 
 endowed with actions $i_0 : \kk \to \End_{\breve{\kk}}(V_0)$ and  $i  : \kk \to \End_{\breve{\kk}}(V)$.


The signature $(1,0)$ condition on $X_0$ implies that $\kk$ acts on $\Fil(X_0)^\rig$ via $\overline{\varphi}:\kk \to \breve{\kk}$.  From this it follows easily that (\ref{trivialization 1}) induces an identification of line bundles
\[
   \overline{\epsilon} \cdot  \Fil (X_0)^\rig =  ( \overline{\epsilon}V_0) \otimes _{\breve{\kk}} \co_{M^\rig}.
\]
One the other hand, the signature $(n-1,1)$ condition on $X$ implies that (\ref{trivialization 2}) determines an inclusion
\[
\overline{\epsilon} \cdot  \Fil (X)^\rig \subset  ( \overline{\epsilon}V ) \otimes_{\breve{\kk}} \co_{M^\rig}
\]
as a local direct summand of corank one.  
This inclusion determines the Grothendieck-Messing  (or Gross-Hopkins) period morphism
\begin{equation}\label{periods}
\pi : M^\rig \to N^\rig
\end{equation}
to the rigid analytic flag variety $N^\rig$ parameterizing all codimension one subspaces of $\overline{\epsilon}V$.
It follows from \cite[Proposition 5.17]{rapoport-zink} that $\pi$ is \'etale.

After restriction to  $U^\rig$ the morphism (\ref{parallel transport}) determines a morphism
\[
 D(X_0)^\rig|_{U^\rig}  \to D(X)^\rig|_{U^\rig} 
\]
that respects the filtrations, and this morphism is induced by a $\kk$-linear inclusion $V_0 \subset V$.
In particular, 
\[
( \overline{\epsilon}V_0) \otimes _{\breve{\kk}} \co_{U^\rig}  \subset  \overline{\epsilon}  \cdot \Fil (X)^\rig|_{U^\rig} \subset 
 ( \overline{\epsilon}V ) \otimes_{\breve{\kk}} \co_{U^\rig},
\]
and so the restriction of (\ref{periods}) to $U^\rig \subset M^\rig$ takes values in the closed rigid analytic subspace of $N^\rig$  parametrizing codimension one subspaces of $\overline{\epsilon}V$ that contain the line  $\overline{\epsilon}V_0$.  
This contradicts (\ref{periods}) being  \'etale.
 \end{proof}

If $\kk/\Q_p$ is unramified, it is proved in \cite{KR-unitary-1} that every Kudla-Rapoport divisor $Z(x)$ is flat over $\breve{\co}_\kk$.  In Appendix \ref{s:exceptional} we will explain why this is false when $\kk/\Q_p$ is ramified.  


\section{Linear invariance of tensor products}
\label{s:main theorem}


Suppose  $x\in V$ is nonzero.
As in the introduction,  let $I_{Z(x)} \subset \co_M$ be the ideal sheaf defining the Kudla-Rapoport divisor $Z(x) \subset M$, and define a  complex of locally free $\co_M$-modules
\[
C(x) = (   \cdots \to 0 \to I_{Z(x)}  \to \co_M\to 0  )
\]
supported in degrees $1$ and $0$.  We extend the definitions to $x=0$ by setting $Z(0) = M$, and 
\[
C(0) = (   \cdots \to 0 \to \omega  \map{0} \co_M\to 0  )
\]
where $\omega$ is the line bundle of Definition \ref{def:modular forms}.

\begin{theorem}\label{thm:linear invariance}
Fix an $r \ge 0$, and  suppose $x_1,\ldots, x_r \in V$ and $y_1,\ldots,y_r \in V$
generate the same $\co_\kk$-submodule.
For every $i\ge 0$ there is an isomorphism of  coherent  $\co_M$-modules 
\begin{equation}\label{homology win}
 H_i ( C(x_1) \otimes \cdots \otimes C(x_r) )   \iso  H_i ( C(y_1) \otimes \cdots \otimes C(y_r) )  .
\end{equation}
\end{theorem}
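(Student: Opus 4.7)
My plan is to follow the strategy sketched in the introduction: first reduce the statement to a single elementary transvection, then construct the desired isomorphism locally on a Zariski cover of $M$ using Grothendieck--Messing theory, and finally verify that the homotopy class of the local construction is canonical so that the induced maps on homology glue.

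Since $\co_\kk$ is a discrete valuation ring, any two $r$-tuples in $V$ generating the same $\co_\kk$-submodule can be linked by a finite sequence of elementary operations: (a) a permutation of the entries; (b) multiplication of one entry by a unit $u \in \co_\kk^\times$; or (c) a transvection $(x_1, x_2, x_3, \ldots, x_r) \mapsto (x_1 + a x_2,\, x_2,\, x_3, \ldots, x_r)$ with $a \in \co_\kk$. This follows from the fact that $\mathrm{GL}_r$ of a PID is generated by elementary matrices, together with a Smith normal form argument to bring generating sets of non-minimal cardinality to a common form. Case (a) is handled by the canonical Koszul symmetry of the tensor product; case (b) is immediate because $Z(ux) = Z(x)$ for $u \in \co_\kk^\times$, hence $C(ux) = C(x)$. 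In case (c) the factors $C(x_j)$ for $j \ge 3$ are unaffected, so after tensoring I reduce to the case $r = 2$ with $y_1 = x_1 + a x_2$ and $y_2 = x_2$.

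I would then fix a Zariski open cover $\mathcal{U}$ of $M$ by opens on which $\omega$ trivializes. On each $U \in \mathcal{U}$, Proposition \ref{prop:cartier} and Definition \ref{def:obstruction} supply the obstruction $\mathrm{obst}(x)$, a canonical generator of $(I_{Z(x)}/I_{Z(x)}^2) \otimes \omega^{-1}|_{Z(x) \cap U}$; by Nakayama this lifts to a section $s \in H^0(U, \omega^{-1})$ whose divisor is $Z(x) \cap U$ (taking $s = 0$ when $x = 0$). Choose such lifts $s_i$ of $\mathrm{obst}(x_i)$ and $t_i$ of $\mathrm{obst}(y_i)$ on each $U$. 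The critical input from deformation theory is the linearity of the obstruction: on the common first infinitesimal neighborhood $W$ of $Z(x_1) \cap Z(x_2) = Z(y_1) \cap Z(y_2)$, Grothendieck--Messing furnishes functorial lifts $\widetilde{x}_i,\widetilde{y}_i \colon D(X_0) \to D(X)$ still satisfying $\widetilde{y}_1 = \widetilde{x}_1 + a\widetilde{x}_2$ and $\widetilde{y}_2 = \widetilde{x}_2$, and passing to the induced map $\Fil(X_0) \to L_X$ yields $\mathrm{obst}(y_1)|_W = \mathrm{obst}(x_1)|_W + a \cdot \mathrm{obst}(x_2)|_W$. Consequently the error $t_1 - (s_1 + a s_2)$ lies in the ideal $I_{Z(x_1) \cap Z(x_2)}^2 \otimes \omega^{-1}|_U$.

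If the linear relation held exactly on $U$, a triangular change of basis in bidegree would produce an explicit isomorphism of Koszul-type complexes $(\omega \xrightarrow{s_1} \co_U) \otimes (\omega \xrightarrow{s_2} \co_U) \iso (\omega \xrightarrow{t_1} \co_U) \otimes (\omega \xrightarrow{t_2} \co_U)$, which, composed with the canonical identifications $(\omega \xrightarrow{s} \co_U) \iso C(x)|_U$ recalled in the introduction, yields the desired local isomorphism. In the actual setting, where the linear relation holds only modulo $I^2$ with $I = I_{Z(x_1) \cap Z(x_2)}$, I would modify the chain map by an explicit correction term valued in $I^2$, and check that two different choices of lifts $(s_i, t_i)$ produce chain maps differing by an explicit chain homotopy whose components also lie in $I^2$. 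Since homotopic chain maps induce equal maps on homology, the local isomorphisms $H_i(C(x_1) \otimes C(x_2))|_U \iso H_i(C(y_1) \otimes C(y_2))|_U$ are canonical, agree on overlaps of $\mathcal{U}$, and glue to the claimed global isomorphism. The principal obstacle I anticipate is precisely this algebraic step: writing down the correction term and the chain homotopy in the presence of the ideal-squared error, and verifying that the resulting homotopy class is independent of the choices---which is the structural reason the theorem must be stated on homology sheaves rather than as an isomorphism of complexes.
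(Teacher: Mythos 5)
Your outline reproduces the strategy from the paper's introduction---reduce to a single transvection $y_1=x_1+ax_2$, $y_2=x_2$, approximate the Grothendieck--Messing obstruction sections locally, build an isomorphism of Koszul-type complexes, and glue on homology using homotopy independence---but it stops short of the two steps that carry essentially all of the difficulty, and the fix you propose for the first of them is not the one that works. You plan to keep the naive lifts $s_1,s_2,t_1$ (so that $t_1-(s_1+as_2)$ lies in $I^2$ with $I=I_{Z(x_1)}+I_{Z(x_2)}$) and then ``modify the chain map by an explicit correction term valued in $I^2$.'' But a degree-one component of a chain map between the two tensor-product complexes must intertwine $\partial_1(\eta_1,\eta_2)=\sigma_1\eta_1+\sigma_2\eta_2$ with $\partial_1^*(\eta_1,\eta_2)=\tau_1\eta_1+\sigma_2\eta_2$, which forces the error to be expressible through multiples of $\sigma_1$ and $\sigma_2$; mere membership in $I^2$ does not give this. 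The paper instead corrects the \emph{sections}: writing the error as $A\hat\tau_1^2+B\hat\tau_1\sigma_2+C\sigma_2^2$, it absorbs the first two terms into a redefinition of $\tau_1$ and the last into a redefinition $\alpha=a+C\sigma_2$ (so $\alpha$ is no longer the constant $a$, only congruent to $a$ on $Z(x_2)$), after which $\tau_1=\sigma_1+\alpha\sigma_2$ holds exactly and the chain isomorphism is the clean triangular map $g_1(\eta_1,\eta_2)=(\eta_1,\eta_2-\alpha\eta_1)$ with no correction term. This regrouping is precisely what Lemma \ref{lem:obstruction lifts} accomplishes, and your proposal does not supply it.

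The second unexecuted step is the homotopy independence, which you describe as checking that two choices of lifts ``produce chain maps differing by an explicit chain homotopy whose components also lie in $I^2$.'' The actual condition needed is different and sharper: for two admissible choices one must produce $\xi\in\mathrm{Frac}(R)$ with $\xi\,\sigma_1\otimes\sigma_1'=\tau_1\otimes\sigma_1'-\tau_1'\otimes\sigma_1$ and, crucially, $\xi\cdot I_{Z(x_1)_U}\subset I_{Z(y_1)_U}\cdot I_{Z(x_2)_U}$, so that the degree-one homotopy $h_1(\eta_1,\eta_2)=-\xi\eta_1\cdot 1\otimes 1$ lands in $I_{Z(y_1)_U}\otimes I_{Z(x_2)_U}$. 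The paper proves this containment by exploiting the regularity of $M$ to reduce to valuations at height-one primes and running a two-case analysis on $\ord_{\mathfrak q}(\sigma_2)$ versus $\ord_{\mathfrak q}(\tau_1)$; nothing in your outline substitutes for this, and the degenerate cases ($x_1=0$, $y_1=0$, or $x_2=0$) require separate treatment that you also omit. Since you explicitly flag these points as ``the principal obstacle'' rather than resolving them, the proposal should be regarded as a correct plan with the core of the proof still missing.
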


\begin{proof}
It is an exercise in linear algebra to check that the list $x_1,\ldots, x_r$ can be transformed to the list $y_1,\ldots, y_r$ using a sequence of elementary operations: 
permute the vectors in the list, scale a vector by an element of $\co_\kk^\times$, 
and add an $\co_\kk$-multiple of one  vector to another. 
The isomorphism class of the complex $ C(x_1) \otimes\cdots \otimes C(x_r)$   is obviously invariant under the first two operations,  and using this one immediately reduces to the case in which
\begin{align*}
y_1& =x_1+ a  x_2 \\
y_2&=x_2 \\
&\, \   \vdots \\
 y_r&=x_r
\end{align*}
for some $a\in \co_\kk$.

Denote by  $Z \subset M$ the closed formal subscheme 
\[
Z(x_1) \cap \cdots \cap Z(x_r) = Z(y_1) \cap \cdots \cap Z(y_r)
\]
(here and below, we use $\cap$ as a shorthand for $\times_M$)  and by $Z\subset \widetilde{Z}$  its first order infinitesimal neighborhood in $M$. 
   Note that both sides of (\ref{homology win}) are supported on $Z$ in the strong sense: 
   they are annihilated by the ideal sheaf defining $Z$.
   
For every $1\le i \le r$,  define  sections
\begin{align*}
s_i & \in H^0 \big( \widetilde{Z}(x_i) , \omega^{-1}|_{ \widetilde{Z}(x_i) } \big) \\
 t_i  & \in H^0 \big( \widetilde{Z}(y_i) , \omega^{-1}|_{ \widetilde{Z}(y_i)  }  \big) 
\end{align*}
by (recall Definition \ref{def:obstruction})
\begin{align*}
s_i & = \begin{cases}
\mathrm{obst}(x_i) & \mbox{if } x_i \neq 0 \\
0 & \mbox{if }x_i=0
\end{cases} \\
t_i & = \begin{cases}
\mathrm{obst}(y_i) & \mbox{if } y_i \neq 0 \\
0 & \mbox{if }y_i=0.
\end{cases} 
\end{align*}
Thus the zero loci of $s_i$ and $t_i$ are  $Z(x_i)$ and $Z(y_i)$, respectively.
After restriction to
\[
\widetilde{Z} \subset \widetilde{Z}(x_1) \cap \cdots \cap  \widetilde{Z}(x_r)\cap  \widetilde{Z}(y_1)\cap \cdots \cap  \widetilde{Z}(y_r)
\]
these sections satisfy
\[
t_1 = s_1 + a s_2,
\]
and $t_i=s_i$ when $i>1$. 
We will approximate $s_1$, $s_2$, and $t_1$, in a noncanonical way, by sections defined over open subsets of $M$.

\begin{lemma}\label{lem:obstruction lifts}
Around every point $z\in Z$ one can find an open affine  neighborhood $U=\Spec(R) \subset M$ over which $\omega_U$ is trivial, and sections
\begin{equation}\label{obstruction lifts}
\sigma_1,  \sigma_2   \in H^0( U, \omega_U^{-1}) \quad\mbox{and}\quad 
 \alpha  \in H^0(U,\co_U) 
\end{equation}
such that the following assertions hold:
\begin{enumerate}
\item[(i)]
$\sigma_1$ has zero locus $Z(x_1)_U$ and   agrees with $s_1$ on $\widetilde{Z}(x_1)_U $,
\item[(ii)]
$\sigma_2$ has zero locus $Z(x_2)_U$ and   agrees with $s_2 $ on $\widetilde{Z}(x_2)_U $,
\item[(iii)]
$\alpha$ restricts to the constant function $a$ on $Z(x_2)_U$,
\item[(iv)]
the section  \[\tau_1 \define \sigma_1 + \alpha \sigma_2\] has zero locus $Z(y_1)_U$
and agrees with $t_1$ on the closed formal subscheme, lying between $Z(y_1)_U$ and $\widetilde{Z}(y_1)_U$, defined by the ideal sheaf 
\[
I_{Z(y_1)_U} \cdot \big(  I_{ Z(y_1)_U } +   I_{ Z( x_2)_U} \big)    \subset \co_U.
\]
\end{enumerate}
Given another  collection of sections
\begin{equation}\label{fake lifts}
\sigma'_1,  \sigma'_2   \in H^0( U, \omega_U^{-1}) \quad\mbox{and}\quad 
 \alpha'  \in H^0(U,\co_U) 
\end{equation}
 satisfying the same properties,  there is an element $\xi \in \mathrm{Frac}(R)$ such that
\begin{equation}\label{homotopy correction}
\xi\cdot  \sigma_1 \otimes \sigma_1'= \tau_1  \otimes \sigma_1' -   \tau_1'   \otimes   \sigma_1 
\end{equation}
and $\xi \cdot I_{Z(x_1)_U}     \subset I_{Z(y_1)_U}  \cdot I_{Z(x_2)_U} $.
\end{lemma}

\begin{proof}
Start with any connected affine open neighborhood $U=\mathrm{Spf}(R)$ of $z\in U$  over which $\omega_U\iso \co_U$, and fix such an isomorphism.  Write  
\begin{align*}
Z(x_1)_U  & =\mathrm{Spf}(R/I_{x_1}) \\
Z(x_2)_U  & =\mathrm{Spf}(R/I_{x_2}) \\
Z(y_1)_U & = \mathrm{Spf}(R/I_{y_1})
\end{align*}
for  ideals $I_{x_1}, I_{x_2}, I_{y_1}  \subset R$,  all  of which 
 are contained in the maximal ideal $\mathfrak{p} \subset R$ determined by the point $z\in U$.
 Identify the sections $s_1$, $s_2$, and $t_1$ with $R$-module generators
\[
s_1 \in I_{x_1} / I_{x_1}^2 , \quad  s_2 \in I_{x_2} / I_{x_2}^2 ,\quad  t_1 \in I_{y_1}/ I_{y_1}^2  , 
\]

Next choose, for $i\in \{1,2\}$,  an arbitrary lift $\sigma_i\in I_{x_i}$ of   $s_i$.
Nakayama's lemma implies $R_\mathfrak{p} \sigma_i = R_\mathfrak{p} I_i$, and so
 there is some  $f \not\in \mathfrak{p}$  such that  $R[1/f]   \sigma_i = R[1/f]  I_i$. After inverting $f$, and hence shrinking $U$, we may  assume that  $R \sigma_i = I_i$.   We now have sections $\sigma_1$ and $\sigma_2$ satisfying properties (i) and (ii).

Choose  an arbitrary lift $\hat{\tau}_1\in I_{y_1}$ of $t_1$.
Again using Nakayama's lemma, we may   shrink $U$ in order to assume that
$
R\hat{\tau}_1 = I_{y_1}.
$
The relation  $y_1=x_1+ a x_2$ implies the equality 
\begin{equation}\label{wee physical}
Z(y_1) \cap Z(x_2) = Z(x_1)\cap Z(x_2)
\end{equation}
of closed formal subschemes of $M$, and hence
\begin{equation}\label{wee intersection}
I_{y_1}+I_{x_2}   =I_{x_1}+I_{x_2} .
\end{equation}
Along the first order infinitesimal neighborhood of (\ref{wee physical}) in $M$  we have
$
t_1 = s_1+ a s_2.
$
This implies that $\hat{\tau}_1 \equiv \sigma_1+a \sigma_2$   modulo the square of (\ref{wee intersection}), 
and so we may write
\[
\hat{\tau}_1 = \sigma_1 + a \sigma_2 + ( A \hat{\tau}_1^2+ B   \hat{\tau}_1\sigma_2+ C \sigma_2^2 )
\]
for some $A,B,C \in R$. Now rewrite this as
\[
 \tau_1 = \sigma_1 + \alpha \sigma_2
\]
where $\tau_1 = \hat{\tau}_1-A\hat{\tau}_1^2 - B \hat{\tau}_1\sigma_2$ and $\alpha= a + C \sigma_2$.

By construction $\tau_1$ agrees with $\hat{\tau}_1$, hence also with $t_1$,   in $R/I_{y_1}(I_{y_1}+I_{x_2})$.
In particular it generates $I_{y_1}/\mathfrak{p}I_{y_1}$ as an $R$-module, and the above argument 
using  Nakayama's lemma allows us to shrink $U$ in order to assume that  $R\tau_1 = I_{y_1}$.
  The sections $\sigma_1$, $\sigma_2$, and $\alpha$ we have constructed  satisfy properties (i), (ii), (iii), and (iv).

Now suppose we have another collection of sections (\ref{fake lifts}) satisfying the same properties.
As above, we use $\omega_U\iso \co_U$ to identify
$\sigma'_1,  \sigma'_2 , \alpha'   \in R$, so that 
\[
R \sigma_1 = I_{x_1} = R\sigma_1' ,\quad R \sigma_2 = I_{x_2} = R\sigma_2' ,\quad R \tau_1 = I_{y_1} = R\tau_1' . 
\]

In the degenerate case where  $I_{x_1}=0$ (this can only happen when $x_1=0$) we must have $\sigma_1=0=\sigma_1'$, and any choice of  $\xi \in R$ will satisfy the stated properties. Thus we may assume  $I_{x_1}\neq 0$.

Define $\xi \in \mathrm{Frac}(R)$ by 
\[
\xi =  \left(  \frac{\tau_1}{\sigma_1} - \frac{\tau_1'}{\sigma_1'}  \right)
=  \left( \frac{ \alpha \sigma_2}{\sigma_1} - \frac{\alpha' \sigma_2'}{ \sigma_1'}  \right)  .
\]
We need to show that  $ R\xi \sigma_1 \subset  R \tau_1\sigma_2$.  
As $R$ is  regular, it is equal to the intersection of  its localizations  at height one primes $\mathfrak{q}\subset R$, and every such  localization $R_\mathfrak{q}$  is a DVR.
Thus  it suffices to prove, for all such $\mathfrak{q}$, 
\begin{equation}\label{height one}
 \ord_\mathfrak{q}(\xi  \sigma_1)  \ge \ord_\mathfrak{q}( \tau_1 \sigma_2 ) .
\end{equation}

The conditions imposed on our sections imply the congruences
\begin{align*}
\sigma_1 \equiv s_1 \equiv \sigma_1'  &  \pmod{I_{x_1}^2} \\ 
\alpha \sigma_2 \equiv a s_2 \equiv \alpha'\sigma'_2  &  \pmod{ I_{x_2}^2 } \\
\tau_1 \equiv t_1 \equiv \tau_1'   & \pmod{  I_{y_1}(I_{y_1}+I_{x_2})   } ,
\end{align*}
the first and third of which imply
\begin{align}
 \sigma_1 / \sigma_1'  & \equiv 1 \pmod { R\sigma_1  }  \label{unit congruence} \\
 \tau_1 / \tau_1' & \equiv 1 \pmod { R\tau_1 + R\sigma_2  }. \nonumber
\end{align}

First assume  $\ord_\mathfrak{q}( \sigma_2) \ge \ord_\mathfrak{q}(\tau_1)$, and note that $\tau_1= \sigma_1 + \alpha \sigma_2$ implies
\[
\ord_\mathfrak{q}( \sigma_1)  \ge \mathrm{min} \{ \ord_\mathfrak{q}(\tau_1) , \ord_\mathfrak{q}( \alpha \sigma_2 )  \}  =  \ord_\mathfrak{q}( \tau_1).
\]
 It  follows from this and (\ref{unit congruence}) that $\sigma_1 / \sigma_1' \equiv 1 \pmod {  R_\mathfrak{q} \tau_1  }$, and hence
\[
\xi  \sigma_1  =   \alpha \sigma_2 - \frac{\sigma_1}{\sigma'_1} \cdot  \alpha' \sigma_2' 
 \equiv  \alpha \sigma_2 \left( 1  - \frac{\sigma_1}{\sigma'_1} \right)  \pmod{  R_\mathfrak{q} \sigma_2^2   }  .
\]
This implies $ \xi  \sigma_1 \equiv 0  \pmod{ R_\mathfrak{q} \tau_1 \sigma_2   }$, proving (\ref{height one}).

Now assume $\ord_\mathfrak{q}(  \sigma_2 ) <  \ord_\mathfrak{q}( \tau_1 )$.   The relation $\tau_1=\sigma_1+\alpha \sigma_2$ implies 
\[
 \ord( \sigma_1) \ge \mathrm{min}\{    \ord( \tau_1),  \ord( \alpha \sigma_2)    \}  \ge \ord_\mathfrak{q}(  \sigma_2 ),
\]
and also  $R_\mathfrak{q} \tau_1 +  R_\mathfrak{q}  \sigma_2= R_\mathfrak{q} \sigma_2.$
The congruences of  (\ref{unit congruence}) therefore  imply
\begin{align*}
 \sigma_1 /  \sigma'_1   & \equiv 1 \pmod{   R_\mathfrak{q}  \sigma_2} \\
\tau_1' /  \tau_1  &   \equiv 1 \pmod{  R_\mathfrak{q} \sigma_2 }  ,
\end{align*}
and hence
\[
\xi  \sigma_1  =  \tau_1 \left( 1  - \frac{\sigma_1}{\sigma'_1} \frac{\tau'_1}{\tau_1} \right)   \equiv 0  \pmod{    R_\mathfrak{q} \tau_1\sigma_2 }.
\]
Once again, this proves (\ref{height one}).
\end{proof}

For a fixed  $z\in Z$, choose an open neighborhood   $U\subset M$   and  sections (\ref{obstruction lifts})  as in Lemma \ref{lem:obstruction lifts}.

\begin{lemma}\label{lem:wee complex}
The choice of sections  (\ref{obstruction lifts}) determines an isomorphism
\begin{equation}\label{wee complex}
  f : C(x_1)_U  \otimes C(x_2)_U  \iso  C(y_1)_U  \otimes C(x_2)_U, 
\end{equation}
and changing the sections  changes the isomorphism by a homotopy.
\end{lemma}

\begin{proof}
The choice of sections determines complexes of locally free $\co_U$-modules
\begin{align*}
D(x_1) & = (   \cdots \to 0 \to \omega_U \map{ \sigma_1 } \co_U\to 0  )\\
D(x_2) & = (   \cdots \to 0 \to \omega_U \map{ \sigma_2 } \co_U\to 0  ) \\
D(y_1) & = (   \cdots \to 0 \to \omega_U \map{ \tau_1 } \co_U\to 0 ),
\end{align*}
and there are obvious isomorphisms
\[
 D(x_1)  \iso C(x_1)_U ,\quad 
  D(x_2) \iso C(x_2)_U ,\quad 
 D(y_1)   \iso C(y_1)_U.
\]
Indeed, if  $x_1\neq 0$ then 
\begin{equation*}
\xymatrix{
{\cdots}\ar[r]  & 0 \ar[r] \ar@{=}[d] & {\omega_U} \ar[r]^{ \sigma_1}  \ar[d]^{ \sigma_1} & { \co_U } \ar[r] \ar@{=}[d]& 0  \\
{\cdots}\ar[r]   & 0 \ar[r] & { I_{Z(x_1)_U } }  \ar[r] & { \co_U } \ar[r]& 0
}
\end{equation*}
defines  an isomorphism $D(x_1)\iso  C(x_1)_U$.   On the other hand, if $x_1=0$ then $\sigma_1=0$, and $D(x_1)=C(x_1)_U$ by definition. The other isomorphisms are entirely similar.

To define $f$, it now suffices to define an isomorphism
\[
g : D(x_1)  \otimes D(x_2)  \iso  D(y_1)  \otimes D(x_2) .
\]
The complexes in question are 
\begin{align*}
D(x_1)  \otimes D(x_2) 
& = ( \cdots \to 0 \to \omega_U \otimes  \omega_U  \map{ \partial_2 } \omega_U\oplus \omega_U \map{ \partial_1 }  \co_U \to 0 )  \\
D(y_1)  \otimes D(x_2) 
& = ( \cdots \to 0 \to \omega_U \otimes  \omega_U  \map{ \partial^*_2 } \omega_U\oplus \omega_U \map{ \partial^*_1 }  \co_U \to 0 ) 
\end{align*}
where the boundary maps are defined by
\begin{align*}
 \partial_1  (\eta_1,\eta_2)  & =  \sigma_1(\eta_1) + \sigma_2 (\eta_2) \\
 \partial^*_1  (\eta_1,\eta_2)  & =  \tau_1(\eta_1) + \sigma_2 (\eta_2) \\
 \partial_2 (    \eta_1 \otimes \eta_2   ) &=  \big(   \sigma_2 (-\eta_2) \eta_1  , \sigma_1 (\eta_1 )\eta_2 \big) \\
 \partial^*_2 (    \eta_1 \otimes \eta_2   ) &=  \big(   \sigma_2 (-\eta_2) \eta_1  , \tau_1 (\eta_1 )\eta_2 \big)
 \end{align*}
for local sections $\eta_1$  and $\eta_2$ of $\omega_U$.  
Recalling that $ \tau_1   =    \sigma_1   +     \alpha  \sigma_2$,  the desired isomorphism is
\[
\xymatrix{
{ \cdots }\ar[r]    &  {  0  } \ar[r] \ar@{=}[d] &    {   \omega_U\otimes\omega_U } \ar[rr]^{\partial_2  }  \ar@{=}[d] &  &   { \omega_U \oplus \omega_U }  \ar[rr]^{\partial_1}  \ar[d]^{ g_1  } &  & {\co_{U} }\ar[r]\ar@{=}[d] & 0 \\
{ \cdots }\ar[r]    &  {  0  } \ar[r]  &    {   \omega_U\otimes\omega_U } \ar[rr]^{\partial^*_2 }  &  &   { \omega_U \oplus \omega_U }  \ar[rr]^{\partial^*_1}  &  & {\co_{U}}\ar[r] & 0,
}
\]
where  $g_1( \eta_1,\eta_2) =  (   \eta_1 ,  \eta_2 -  \alpha   \eta_1  )$.

Having constructed the isomorphism (\ref{wee complex}), we now study its dependence on the sections (\ref{obstruction lifts}).  
Suppose we have another  collection of sections (\ref{fake lifts}), and hence two isomorphisms
\[
f,f' : C(x_1)_U  \otimes C(x_2)_U  \iso  C(y_1)_U  \otimes C(x_2)_U.
\]
We must prove that $f$ and $f'$ are  homotopic.

If $x_2=0$ then $y_1=x_1$, and the conditions imposed on the sections (\ref{obstruction lifts}) imply that $\sigma_1=\tau_1$,  $\sigma_2=0$, and $\alpha=a$.
From this it is easy to see that $f=f'$, and so henceforth we assume that $x_2\neq 0$.

If $x_1=0$ and $y_1=0$, then  the conditions imposed on  (\ref{obstruction lifts}) imply that $\sigma_1=0$ and $\tau_1=0$.  
The relation $\tau_1=\sigma_1+\alpha\sigma_2$ and our assumption $x_2\neq 0$ therefore imply that $\alpha=0$.
Tracing through the definitions, we again find that $f=f'$.

If  $x_1\neq 0$ and $y_1\neq 0$ then  $ e=f-f'$ is given explicitly by
\[
\xymatrix{
 {  0  } \ar[r]  &    {   I_{Z(x_1)_U} \otimes   I_{Z(x_2)_U}    } \ar[r]^{\partial_2  }  \ar[d]^{e_2} &     {I_{Z(x_1)_U} \oplus   I_{Z(x_2)_U}   }  \ar[rr]^{\partial_1}  \ar[d]^{ e_1 }   \ar@{-->}[dl]_{ h_1}  & & {\co_{U} }\ar[r]\ar[d]^0  \ar@{-->}[dll]_{ h_0} & 0 \\
  {  0  } \ar[r]  &    {     I_{Z(y_1)_U} \otimes   I_{Z(x_2)_U}    } \ar[r]^{\partial^*_2 }    &   {  I_{Z(y_1)_U} \oplus   I_{Z(x_2)_U} }  \ar[rr]^{\partial^*_1}  &  & {\co_{U}}\ar[r] & 0,
}
\]
where the boundary maps are 
\begin{align*}
 \partial_1  (\eta_1,\eta_2)  & = \eta_1+\eta_2  \\
 \partial^*_1  (\eta_1,\eta_2)  & =  \eta_1+\eta_2   \\
 \partial_2 (    \eta_1 \otimes \eta_2   ) &=   (   -\eta_1  \eta_2  ,  \eta_1  \eta_2  ) \\
 \partial^*_2 (    \eta_1 \otimes \eta_2   ) &=   (   -\eta_1 \eta_2  , \eta_1 \eta_2  ),
 \end{align*}
and 
\[
e_1( \eta_1,\eta_2) =   (  \xi \eta_1  ,  -\xi \eta_1  ) ,\qquad 
e_2( \eta_1\otimes \eta_2) =  \xi \eta_1\otimes \eta_2  .
\] 
Here $\xi$ is the rational function on $U$ of Lemma \ref{lem:obstruction lifts}
($\xi$ is uniquely determined by the relation (\ref{homotopy correction}), as our assumption $x_1\neq 0$ implies that $\sigma_1$ and $\sigma'_1$ are nonzero).
The dotted arrows, which exhibit the homotopy between $e$ and $0$, are defined by 
$h_0( \eta ) = (0,0)$ and $h_1(\eta_1,\eta_2) = -\xi \eta_1 \cdot 1\otimes 1$.
Note that the definition of $h_1$ only makes sense because of the inclusion 
\[
\xi \cdot I_{Z(x_1)_U}     \subset I_{Z(y_1)_U}  \cdot I_{Z(x_2)_U} 
\]
 of Lemma \ref{lem:obstruction lifts}.

If $x_1= 0$ and $y_1\neq 0$ then $ e=f-f'$ is given explicitly by 
\[
\xymatrix{
 {  0  } \ar[r]  &    {   \omega_U \otimes   I_{Z(x_2)_U}    } \ar[r]^{\partial_2  }  \ar[d]^{e_2} &     { \omega_U \oplus   I_{Z(x_2)_U}   }  \ar[rr]^{\partial_1}  \ar[d]^{ e_1 }   \ar@{-->}[dl]_{ h_1}  & & {\co_{U} }\ar[r]\ar[d]^0  \ar@{-->}[dll]_{ h_0} & 0 \\
  {  0  } \ar[r]  &    {     I_{Z(y_1)_U} \otimes   I_{Z(x_2)_U}    } \ar[r]^{\partial^*_2 }    &   {  I_{Z(y_1)_U} \oplus   I_{Z(x_2)_U} }  \ar[rr]^{\partial^*_1}  &  & {\co_{U}}\ar[r] & 0,
}
\]
where the boundary maps are 
\begin{align*}
 \partial_1  (\eta_1,\eta_2)  & =   \eta_2  \\
 \partial^*_1  (\eta_1,\eta_2)  & =  \eta_1+  \eta_2   \\
 \partial_2 (    \eta_1 \otimes \eta_2   ) &=   (   -\eta_2  \eta_1  ,  0  ) \\
 \partial^*_2 (    \eta_1 \otimes \eta_2   ) &=  (   -\eta_2 \eta_1  ,  \eta_2  \eta_1 ),
 \end{align*}
and, setting $\zeta = \tau_1-\tau_1'  \in H^0(U, \omega_U^{-1})$, 
\[
e_1( \eta_1,\eta_2) = \left(  \zeta( \eta_1 )   ,  -\zeta( \eta_1)    \right)   ,\qquad 
e_2( \eta_1\otimes \eta_2) =  \zeta(  \eta_1) \otimes \eta_2 .
\] 
The dotted arrows, which exhibit the homotopy between $e$ and $0$, are defined by 
$h_0(\eta) = (0,0)$ and $h_1(\eta_1,\eta_2) = -\zeta (\eta_1) \cdot 1\otimes 1$.
To make sense of the definition of $h_1$, note  that the relation $y_1=ax_2$ implies $Z(x_2) \subset Z(y_1)$, and hence
\[
\zeta(\eta_1) \in   I_{Z(y_1)_U} \cdot   (     I_{Z(y_1)_U} +    I_{Z(x_2)_U}  ) =  I_{Z(y_1)_U} \cdot   I_{Z(x_2)_U}.
\]

The case $x_1\neq 0$ and $y_1=0$ is entirely analogous to the previous case, and we leave the details to the reader.
\end{proof}

As $y_j=x_j$ for $j\ge 2$,  the  isomorphism of Lemma \ref{wee complex}  determines an isomorphism
\begin{equation*}
C(x_1)_U  \otimes\cdots \otimes  C(x_r)_U  \iso C(y_1)_U  \otimes \cdots \otimes  C(y_r)_U ,
\end{equation*}
whose homotopy class does not depend on the choices  (\ref{obstruction lifts})  used in its construction.
Hence the  induced  isomorphism 
\[
H_i ( C(x_1)  \otimes\cdots \otimes  C(x_r) )_U  \iso  H_i ( C(y_1)  \otimes \cdots \otimes  C(y_r) )_U
\]
of $\co_U$-modules  also does not depend on these choices.
By varying $U$  and gluing, we  obtain an isomorphism (\ref{homology win}) 
defined over an open neighborhood of $Z$ in $M$.
We have already noted that both sides of (\ref{homology win}) are  supported on $Z$, and so the isomorphism extends uniquely to all of $M$.  This completes the proof of Theorem  \ref{thm:linear invariance}.
\end{proof}

\appendix
\section{The exceptional divisor}
\label{s:exceptional}
%

Throughout this appendix we assume that $\kk/\Q_p$ is ramified.  
We want to explain why the Kudla-Rapoport divisors of Definition \ref{def:KR divisor} are generally not flat over $\breve{\co}_\kk$.

Denote by $\breve{\F} = \breve{\co}_\kk/ \breve{\mathfrak{m}}$ the residue field of $\breve{\co_\kk}$.  
The two embeddings  $\varphi,\overline{\varphi} : \co_\kk \to \breve{\co}_\kk$  necessarily reduce to the unique $\Z_p$-algebra morphism $\co_\kk \to \breve{\F}$

\begin{definition}
The \emph{exceptional divisor} $\Exc \subset M$ is the set of all points $s\in M$ at which the action  
\[
i : \co_\kk \to \End( \Lie(X_s) )
\]
 is through scalars; that is to say, the action factors through the unique   morphism $\co_\kk \to \breve{\F}$.
This is a closed subset of the underlying topological space of $M$, and we endow it with its induced structure of a reduced scheme over $\breve{\F}$.
\end{definition}

\begin{proposition}\label{prop:exceptional}
 The exceptional divisor $\Exc \subset M$ is a Cartier divisor, and  is isomorphic to a disjoint union of copies of the projective space $\mathbb{P}^{n-1}$ over $\breve{\F}$.
\end{proposition}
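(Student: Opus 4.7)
The plan is to identify $\Exc$ with the vanishing locus of a natural morphism of line bundles on $M$ (giving the Cartier divisor structure), and to identify each connected component of $\Exc$ with $\mathbb{P}^{n-1}$ by recognizing that over $\Exc$ the data of $F_X$ reduces to the choice of a hyperplane in a fixed $n$-dimensional vector space.

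For the Cartier structure, I would consider the composition
\[
L_X \hookrightarrow \Lie(X) \twoheadrightarrow \Lie(X)/F_X,
\]
which is a morphism of line bundles (by Proposition~\ref{prop:new line}, both terms are line bundles on which $\co_\kk$ acts via $\overline{\varphi}$) and is injective by the argument at the end of that proof. Its vanishing locus $D\subset M$ is therefore an effective Cartier divisor. A direct analysis shows that $D$ coincides set-theoretically with $\Exc$: a point $s \in D$ satisfies $L_X|_s \subset F_X|_s$, which forces the actions of $\co_\kk$ via $\varphi$ and $\overline{\varphi}$ to agree on the nonzero line $L_X|_s$; since $\kk/\Q_p$ is ramified and $\varphi-\overline{\varphi}$ generates $\breve{\mathfrak{d}}=\breve{\mathfrak{m}}$, this places $s$ in the special fiber, and combined with the known action of a uniformizer $\pi\in\co_\kk$ on the subquotients $F_X$ and $\Lie(X)/F_X$ it forces the action on the full $\Lie(X)|_s$ to be scalar. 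For the scheme-theoretic equality and reducedness I would appeal to Kr\"amer's explicit local model \cite{kramer}: in his coordinates, a neighborhood of any point of $\Exc$ is isomorphic to an open in an explicit model where the exceptional locus is visibly cut out by a single equation and is smooth of dimension $n-1$, so $D$ is reduced and equals $\Exc$ as subschemes.

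For the identification with $\bigsqcup \mathbb{P}^{n-1}$, along a connected component $\Exc^\circ$ the underlying triple $(X,i,\lambda)$ is rigid: no first-order deformation preserves the scalar action of $\co_\kk$ on $\Lie(X)$. Consequently $\Lie(X)|_{\Exc^\circ}$ is a trivial rank-$n$ vector bundle, and the data of the subbundle $F_X|_{\Exc^\circ}$ determines a classifying morphism $\Exc^\circ \to \mathbb{P}(\Lie(X^{\mathrm{ss}})) \cong \mathbb{P}^{n-1}$, where $X^{\mathrm{ss}}$ is the fixed superspecial $p$-divisible group associated with that component. An inverse can be constructed by Grothendieck-Messing theory: the tautological hyperplane over $\mathbb{P}^{n-1}$ inside $\Lie(X^{\mathrm{ss}})\otimes\co_{\mathbb{P}^{n-1}}$ yields a family of quadruples $(X,i,\lambda,F_X)$ satisfying the moduli problem defining $M$, producing a morphism $\mathbb{P}^{n-1} \to M$ that factors through $\Exc^\circ$. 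The local model computation confirms that these two maps are mutually inverse.

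The main obstacle will be the reducedness of $D$ and the rigidity of $(X,i,\lambda)$ along each component, both of which rely essentially on Kr\"amer's explicit description of the local model. Neither fact is visible from the global moduli description alone, and without that input a substantial local analysis of $M_{(n-1,1)}$ would have to be redone from scratch. A secondary technical point is verifying that the classifying map $\Exc^\circ \to \mathbb{P}^{n-1}$ and its candidate inverse are genuinely inverse to each other; once the local structure is pinned down this is a formal comparison of moduli functors.
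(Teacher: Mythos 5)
Your route is genuinely different from the paper's, and it has a gap at its central step. You propose to realize $\Exc$ as the zero locus $D$ of the line bundle map $L_X \hookrightarrow \Lie(X) \twoheadrightarrow \Lie(X)/F_X$, which would indeed give a Cartier divisor; but your argument that $D \subseteq \Exc$ set-theoretically does not work as stated. Knowing that a uniformizer $\pi \in \co_\kk$ acts as a scalar on $F_X|_s$ and on $\Lie(X)|_s/F_X|_s$ does not force it to act as a scalar on $\Lie(X)|_s$: the extension can be non-split as an $\co_\kk$-module (e.g.\ a free $\breve{\F}[\pi]/(\pi^2)$-summand has $\pi$ acting as zero on the sub $\pi\cdot(-)$ and on the quotient, but not on the whole). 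This is precisely why $\Exc$ is a proper closed condition inside the special fiber in the ramified case --- the signature condition already forces scalar action on the sub and quotient at every closed point, so your criterion would not distinguish $\Exc$ from its complement without using the specific way $L_X$ sits inside $\Lie(X)$. The reverse inclusion $\Exc \subseteq D$ is also asserted but not argued. Finally, you defer the reducedness of $D$ and the rigidity of $(X,i,\lambda)$ to an unspecified computation with Kr\"amer's local model; these are exactly the points where the proof would have to be carried out in detail.

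The paper avoids all of this. Reducedness is free because $\Exc$ is \emph{defined} as a closed subset with its reduced induced structure. The $\mathbb{P}^{n-1}$ description comes first: at a point $s \in \Exc(\breve{\F})$ the $\co_\kk$-action on $\Lie(X_s)$ is scalar, so Kr\"amer's signature condition is satisfied by \emph{every} hyperplane $F \subset \Lie(X_s)$, and varying $F$ while fixing $(X_{0s}, X_s)$ and all auxiliary data gives a closed immersion $\mathbb{P}(\Lie(X_s)^\vee) \hookrightarrow \Exc$; the exceptional locus is the disjoint union of these. Each component is then reduced, irreducible, and of codimension one, and the Cartier property follows immediately from the regularity of $M$ (height-one primes in a regular local ring are principal). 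No line bundle map, no local model input, and no separate reducedness argument are needed. If you want to salvage your approach, you would have to prove the scheme-theoretic equality $D = \Exc$ honestly, which is more work than the statement requires.
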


\begin{proof}
 A point  $s\in M(\breve{\F})$ corresponds to a pair $(X_{0s},X_s)$ over $\breve{\F}$,  
which we recall is really a tuple   
\[
(X_{0s},i_0, \lambda_0,\varrho_0, X_s, i ,\lambda, F_{X_s}  ,\varrho)\in M(\breve{\F}).
\]
 If $s\in \Exc(\breve{\F})$ then the action of $\co_\kk$ on $\Lie(X)$ is through the unique $\Z_p$-algebra morphism $\co_\kk \to \breve{\F}$.  
 This implies that \underline{any} codimension one subspace of $F\subset \Lie(X_s)$ satisfies Kramer's signature condition as in \S \ref{s:RZ}, and we obtain a closed immersion
\[
\mathbb{P}( \Lie(X_s)^\vee ) \hookrightarrow \Exc 
\]
by sending $F \mapsto (X_{0s},i_0, \lambda_0,\varrho_0, X_s, i ,\lambda, F   ,\varrho).$
In other words, vary the codimension one subspace in $\Lie(X_s)$ and leave all other data fixed.

It is clear that $\Exc$ is the disjoint union of all such closed subschemes, and that every connected component of $\Exc$ is reduced, irreducible,  and of codimension one in $M$.  The regularity of $M$ then implies that $\Exc\subset M$ is defined locally by one equation.
\end{proof}

\begin{proposition}\label{prop:lots of exceptional}
Fix a nonzero $x\in V$, and any  connected component $D \subset \Exc$.
For all $k\gg 0$ we have 
$
D \subset Z(p^k x).
$
  In particular $Z(p^k x)$ is not flat over $\breve{\co}_\kk$.
\end{proposition}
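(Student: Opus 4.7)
My plan is to use Proposition \ref{prop:exceptional} to describe $D$ very explicitly and then to reduce the statement to a fact about a single geometric point.

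First I would fix an $\breve{\F}$-point $s \in D(\breve{\F})$, corresponding to some tuple $(X_{0,s}, i_0, \lambda_0, \varrho_{0,s}, X_s, i, \lambda, F_0, \varrho_s)$. By the construction used in the proof of Proposition \ref{prop:exceptional}, the isomorphism $\mathbb{P}(\Lie(X_s)^\vee) \iso D$ realizes the universal tuple over $D$ as the pullback along $D \to \Spec(\breve{\F})$ of all of the data at $s$, with only the filtration $F$ varying universally. In particular, the $p$-divisible groups $X_0|_D$ and $X|_D$, together with their $\co_\kk$-actions, polarizations, and framing quasi-isogenies $\varrho_0|_D$ and $\varrho|_D$, are all obtained by base-change from $\Spec(\breve{\F})$.

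Next, the element $x \in V$ corresponds at $s$ to the quasi-isogeny $\tilde{x}_s = \varrho_s^{-1} \circ x \circ \varrho_{0,s} : X_{0,s} \to X_s$. Any quasi-isogeny of $p$-divisible groups over a field has denominator a power of $p$, so there is some $k_0 \geq 0$ for which $p^{k_0} \tilde{x}_s \in \Hom_{\co_\kk}(X_{0,s}, X_s)$ is a genuine homomorphism. For any $k \geq k_0$ the pullback of $p^k \tilde{x}_s$ along $D \to \Spec(\breve{\F})$ remains a genuine homomorphism $X_0|_D \to X|_D$, and under the canonical inclusion (\ref{hermitian inclusion}) it coincides with the image of $p^k x$ in $\Hom_{\co_\kk}(X_0|_D, X|_D)[1/p]$. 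By Definition \ref{def:KR divisor} this yields $D \subset Z(p^k x)$. For the non-flatness assertion, $Z(p^k x) \subset M$ is a Cartier divisor by Proposition \ref{prop:cartier}, and hence has dimension $n-1$; if it were flat over $\breve{\co}_\kk$ its special fiber would have dimension $n-2$, but $D \iso \mathbb{P}^{n-1}_{\breve{\F}}$ is $(n-1)$-dimensional and is contained in the special fiber of $Z(p^k x)$, a contradiction.

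The main technical point will be the identification in the second step: verifying that the pullback of $p^k \tilde{x}_s$ along $D \to \Spec(\breve{\F})$ agrees with the image of $p^k x$ under (\ref{hermitian inclusion}) over $D$. Once one unwinds that all of the framing data on $D$ is pulled back from $s$, however, this becomes essentially tautological, so I do not anticipate any serious obstruction.
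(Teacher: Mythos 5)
Your argument is correct and is essentially the paper's own proof: both hinge on the observation, extracted from the proof of Proposition \ref{prop:exceptional}, that the universal data over $D\iso\mathbb{P}^{n-1}$ is constant (pulled back from a single point $s$, with only the filtration varying), so that integrality of $p^k x$ at $s$ propagates to all of $D$ via the isomorphism $\Hom_{\co_\kk}(X_{0}|_D, X|_D)\iso\Hom_{\co_\kk}(X_{0s}, X_s)$. Your closing dimension count for the non-flatness (a flat Cartier divisor in the $n$-dimensional $M$ would have $(n-2)$-dimensional special fiber, which cannot contain the $(n-1)$-dimensional $D$) is a valid justification of a step the paper leaves implicit.
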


\begin{proof}
If we fix one point $s\in D$,  Lemma \ref{lem:hermitian inclusion} allows us to view
\[
x\in \Hom_{\co_\kk} (X_{0s} , X_s) [ 1/p] .
\]
For all $k\gg 0$ we thus have 
$
p^kx\in \Hom_{\co_\kk} (X_{0s} , X_s) .
$
It follows from the characterization of $D \iso \mathbb{P}^{n-1}$ found in the proof of Proposition \ref{prop:exceptional} that the $p$-divisible groups $X_{0D}$ and $X_D$ are constant (that is, are pullbacks via $D\to \Spec(\breve{\F})$ of $p$-divisible groups over $\breve{\F}$), and hence the restriction map
\[
\Hom_{\co_\kk} (X_{0D} , X_D) \to \Hom_{\co_\kk} (X_{0s} , X_s)
\]
is an isomorphism. Hence $p^k x\in \Hom_{\co_\kk} (X_{0D} , X_D)$ and  $D \subset Z(p^k x)$.
\end{proof}

\bibliographystyle{amsalpha}

\newcommand{\etalchar}[1]{$^{#1}$}
\providecommand{\bysame}{\leavevmode\hbox to3em{\hrulefill}\thinspace}
\providecommand{\MR}{\relax\ifhmode\unskip\space\fi MR }
\providecommand{\MRhref}[2]{%
  \href{http://www.ams.org/mathscinet-getitem?mr=#1}{#2}
}
\providecommand{\href}[2]{#2}

\end{document}